\newtheorem{theorem}{Theorem}
\theoremstyle{plain}
\newtheorem{corollary}{Corollary}
\newtheorem{lemma}{Lemma}
\newtheorem{remark}{Remark}
\numberwithin{equation}{section}
\begin{document}
\title[Generalized mixed equilibrium problems and Hierarchical fixed point
problems]{An explicit iterative method to solve generalized mixed
equilibrium problem, variational inequality problem and hierarchical fixed
point problem for a nearly nonexpansive mapping}
\author{Ibrahim Karahan$^{\ast }$}
\address{$^{\ast }$(Corresponding Author) Department of Mathematics, Faculty
of Science, Erzurum Technical University, Erzurum, 25700, Turkey}
\email{ibrahimkarahan@erzurum.edu.tr}
\subjclass[2000]{49J40; 47H10; 47H09; 47H05}
\keywords{Generalized mixed equilibrium problem, variational inequality;
hierarchical fixed point, projection method, nearly nonexpansive mapping}

\begin{abstract}
In this paper, we introduce a new iterative method to find a common solution
of a generalized mixed equilibrium problem, a variational inequality problem
and a hierarchical fixed point problem for a demicontinuous nearly
nonexpansive mapping. We prove that the proposed method converges strongly
to a common solution of above problems under the suitable conditions. It is
also noted that the main theorem is proved without usual demiclosedness
condition. Also, under the appropriate assumptions on the control sequences
and operators, our iterative method can be reduced to recent methods. So,
the results here improve and extend some recent corresponding results given
by many other authors.
\end{abstract}

\maketitle

\section{Introduction}

Let $H$ be a real Hilbert space with inner product $\left\langle \cdot
,\cdot \right\rangle $ and norm $\left\Vert \cdot \right\Vert $, $C$ be a
nonempty closed convex subset of $H$. Let $G:C\times C\rightarrow 
\mathbb{R}
$ be a bifunction, $\varphi :C\rightarrow 
\mathbb{R}
$ be a function where $%
\mathbb{R}
$ is the set of all real numbers and $B$ be a nonlinear mapping. The
generalized mixed equilibrium problem ($GMEP$), is finding a point $x\in C$
such that%
\begin{equation}
G\left( x,y\right) +\varphi \left( y\right) -\varphi \left( x\right)
+\left\langle Bx,y-x\right\rangle \geq 0\text{, }\forall y\in C.  \label{1.1}
\end{equation}
The set of solutions of the problem (\ref{1.1}) is denoted by $GMEP\left(
G,\varphi ,B\right) $. In the problem (\ref{1.1}), if we take $B=0$, then
the generalized mixed equilibrium problem is reduced to mixed equilibrium
problem, denoted by $MEP$, which is to find a point $x\in C$ such that%
\begin{equation*}
G\left( x,y\right) +\varphi \left( y\right) -\varphi \left( x\right) \geq 0%
\text{, }\forall y\in C.
\end{equation*}%
The set of solutions of the mixed equilibrium problem is denoted by $%
MEP(G,\varphi )$. In the case of $\varphi =0$ in the problem (\ref{1.1}),
then the generalized mixed equilibrium problem is reduced to generalized
equilibrium problem, denoted by $GEP$, which is to find a point $x\in C$
such that 
\begin{equation*}
G\left( x,y\right) +\left\langle Bx,y-x\right\rangle \geq 0\text{, }\forall
y\in C
\end{equation*}%
whose set of solutions is denoted by $GEP(G,B)$. If we take $\varphi =0$ and 
$G\left( x,y\right) =0$ for all $x,y\in C$, then the generalized mixed
equilibrium problem is equivalent to find a $x\in C$ such that%
\begin{equation}
\left\langle Bx,y-x\right\rangle \geq 0\text{, }\forall y\in C  \label{1.2}
\end{equation}%
which is called the variational inequality problem, denoted by $VI\left(
C,B\right) $. The solution of $VI\left( C,B\right) $ is denoted by $\Omega $%
, i.e.,%
\begin{equation*}
\Omega =\left\{ x\in C:\left\langle Bx,y-x\right\rangle \geq 0\text{, }%
\forall y\in C\right\} .
\end{equation*}

The generalized mixed equilibrium problem is very general in the sense that
it includes, as special cases, the optimization problem, the variational
inequality problem, the fixed point problem, the nonlinear complementarity
problem, the Nash equilibrium problem in noncooperative games, the vector
optimization problem, the saddle poin problem, the minimization problem and
so forth. Hence, some solution methods have been studied to solve
generalized mixed equilibrium problem by many authors; see, for example \cite%
{PY, LCZ, SK, S, CY, PWK}.

On the other hand, we consider another problem called as hierarchical fixed
point problem. Let $S:C\rightarrow H$ be a mapping. To hierarchically find a
fixed point of a mapping $T$ with respect to another mapping $S$ is to find
an $x^{\ast }\in Fix(T)$ such that%
\begin{equation}
\left\langle x^{\ast }-Sx^{\ast },x-x^{\ast }\right\rangle \geq 0\text{, \ }%
x\in Fix(T),  \label{Hyr}
\end{equation}%
where $Fix\left( T\right) $ is the set of fixed points of $T$, i.e., $%
Fix\left( T\right) =\left\{ x\in C:Tx=x\right\} $. It is known that the
hierarchical fixed point problem (\ref{Hyr}) is related with some monotone
variational inequalities and convex programming problems; see \cite%
{CMMY,YCL,GVC,YC}. Hence, various methods to solve the hierarchical fixed
point problem have been studied by many authors; see\textbf{,} for example%
\textbf{\ }\cite{YCL,cian,marxu,xu,deng} and the references therein.

Now, we give some definitions of nonlinear mappings which are used in the
next sections. Let $T:C\rightarrow H$ be a mapping. If there exits a
constant $L>0$ such that $\left\Vert Tx-Ty\right\Vert \leq L\left\Vert
x-y\right\Vert $ for all $x,y\in C$, then $T$ is called $L$-Lipschitzian. In
particular, if $L\in \lbrack 0,1)$, then $T$ is said to be a contraction; if 
$L=1$, then $T$ is called a nonexpansive mapping. Let fix a sequence $%
\{a_{n}\}$ in $[0,\infty )$ with $a_{n}\rightarrow 0$. If the inequality $%
\left\Vert T^{n}x-T^{n}y\right\Vert \leq \left\Vert x-y\right\Vert +a_{n}$
holds for all $x,y\in C$ and $n\geq 1$, then $T$ is said to be nearly
nonexpansive \cite{AOS1,AOS2} with respect to $\{a_{n}\}$. It is clear that
the class of nearly nonexpansive mappings is a wider class of nonexpansive
mappings. A mapping $F:C\rightarrow H$ is called $\eta $-strongly monotone
operator if there exists a constant $\eta \geq 0$ such that%
\begin{equation*}
\left\langle Fx-Fy,x-y\right\rangle \geq \eta \left\Vert x-y\right\Vert ^{2},%
\text{ }\forall x,y\in C.
\end{equation*}%
In particular, if $\eta =0$, then $F$ is said to be monotone.

Below, we gather some iterative processes which are related with the
problems (\ref{1.1}), (\ref{1.2}) and (\ref{Hyr}).

In 2011, Ceng et al. \cite{Ceng} introduced the following iterative method:%
\begin{equation}
x_{n+1}=P_{C}\left[ \alpha _{n}\rho Vx_{n}+\left( 1-\alpha _{n}\mu F\right)
Tx_{n}\right] ,\forall n\geq 1,  \label{f2}
\end{equation}%
where $F$ is a $L$-Lipschitzian and $\eta $-strongly monotone operator with
constants $L,\eta >0$ and$\ V$ is a $\gamma $-Lipschitzian (possibly
non-self) mapping with constant $\gamma \geq 0$ such that $0<\mu <\frac{%
2\eta }{L^{2}}$ and $0\leq \rho \gamma <1-\sqrt{1-\mu \left( 2\eta -\mu
L^{2}\right) }$. Under the suitable conditions, they proved that the
sequence $\{x_{n}\}$ generated by (\ref{f2}) converges strongly to the
unique solution of the variational inequality%
\begin{equation}
\left\langle \left( \rho V-\mu F\right) x^{\ast },x-x^{\ast }\right\rangle
\leq 0,\text{ }\forall x\in Fix(T).  \label{v2}
\end{equation}%
Recently, motivated by the iteration method (\ref{f2}), Wang and Xu \cite{WX}
introduced the following iterative method to find a solution for a
hierarchical fixed point problem:%
\begin{equation}
\left\{ 
\begin{array}{l}
y_{n}=\beta _{n}Sx_{n}+\left( 1-\beta _{n}\right) x_{n},\text{ \ \ \ \ \ \ \
\ \ \ \ \ \ \ \ \ \ \ \ \ \ \ \ } \\ 
x_{n+1}=P_{C}\left[ \alpha _{n}\rho Vx_{n}+\left( I-\alpha _{n}\mu F\right)
Ty_{n}\right] ,\text{ }\forall n\geq 1,%
\end{array}%
\right.  \label{f3}
\end{equation}%
where $S,T:C\rightarrow C$ are nonexpansive mappings, $V:C\rightarrow H$ is
a $\gamma $-Lipschitzian mapping and $F:C\rightarrow H$ is a $L$%
-Lipschitzian and $\eta $-strongly monotone operator. They proved that under
some approximate assumptions on parameters, the sequence $\{x_{n}\}$
generated by (\ref{f3}) converges strongly to the hierarchical fixed point
of $T$ with respect to the mapping $S$ which is the unique solution of the
variational inequality (\ref{v2}). So, the iterative method (\ref{f3}) is a
generalization form of the various methods of other authors.

Very recently, Bnouhachem and Noor \cite{BN} introduced the following
iterative scheme to approach a common solution of a variational inequality
problem, a generalized equilibrium problem and a hierarchical fixed point
problem:%
\begin{equation}
\left\{ 
\begin{array}{l}
G\left( u_{n},y\right) +\left\langle Bx,y-u_{n}\right\rangle +\frac{1}{r_{n}}%
\left\langle y-u_{n},u_{n}-x_{n}\right\rangle \geq 0\text{, }\forall y\in C
\\ 
z_{n}=P_{C}\left( u_{n}-\lambda _{n}Au_{n}\right) , \\ 
y_{n}=P_{C}\left( \beta _{n}Sx_{n}+\left( 1-\beta _{n}\right) z_{n}\right) ,
\\ 
x_{n+1}=P_{C}\left( \alpha _{n}fx_{n}+\sum_{i=1}^{n}\left( \alpha
_{i-1}-\alpha _{i}\right) V_{i}y_{n}\right) \text{, }\forall n\geq 1%
\end{array}%
\right. ,  \label{f4}
\end{equation}%
where $V_{i}=k_{i}I+\left( 1-k_{i}\right) T_{i}$, $0\leq k_{i}<1$, $\left\{
T_{i}\right\} _{i=1}^{\infty }:C\rightarrow C$ is a countable family of $%
k_{i}$-strict pseudo-contraction mappings, $A$ and $B$ are inverse strongly
monotone mappings. They proved that the sequence $\left\{ x_{n}\right\} $
generated by (\ref{f4}) converges strongly to a point $z\in P_{\Omega \cap
GEP\left( G,B\right) \cap Fix\left( T\right) }f\left( z\right) $ which is
the unique solution of the following variational inequality:%
\begin{equation*}
\left\langle \left( I-f\right) z,x-z\right\rangle \geq 0\text{, }\forall
x\in \Omega \cap GEP\left( G\right) \cap Fix\left( T\right) ,
\end{equation*}%
where $Fix\left( T\right) =\bigcap\nolimits_{i=1}^{\infty }Fix\left(
T_{i}\right) $.

In this paper, motivated and inspired by the above iterative methods, we
introduce an iterative projection method. Also, we prove a strong
convergence theorem to compute an approximate element of the common set of
solutions of a generalized mixed equilibrium problem, a variational
inequality problem and a hierarchically fixed point problem for a nearly
nonexpansive mapping. The proposed method generalizes many known results;
for example, Yao et. al. \cite{YCL}, Marino and Xu \cite{marxu}, Ceng et.
al. \cite{Ceng}, Wang and Xu \cite{WX}, Moudafi \cite{mou}, Xu \cite{xu1},
Tian \cite{tian} and Suzuki \cite{suzuki} and the references therein.

\section{\protect\bigskip Preliminaries}

In this section, we gather some useful lemmas and definitions which we need
for the next section. Throughout this paper, we use $"\rightarrow "$ and $%
"\rightharpoonup "$ for the strong and weak convergence, respectively. Let $%
C $ be a nonempty closed convex subset of a real Hilbert space $H$. It is
known that for any $x\in H$, there exists a unique nearest point in $C$
denoted by $P_{C}x$ such that%
\begin{equation*}
\left\Vert x-P_{C}x\right\Vert =\inf_{y\in C}\left\Vert x-y\right\Vert \text{%
, }\forall x\in H
\end{equation*}%
The mapping $P_{C}:H\rightarrow C$ is called a metric projection. For a
metric projection $P_{C}$, the following inequalities are hold:

\begin{enumerate}
\item $\left\Vert P_{C}x-P_{C}y\right\Vert \leq \left\Vert x-y\right\Vert $, 
$\forall x,y\in H,$

\item $\left\langle x-y,P_{C}x-P_{C}y\right\rangle \geq \left\Vert
P_{C}x-P_{C}y\right\Vert ^{2}$, $\forall x,y\in H,$

\item $\left\langle x-P_{C}x,P_{C}x-y\right\rangle \geq 0$, $\forall x\in
H,y\in C$.
\end{enumerate}

\begin{lemma}
\cite{Ceng}\label{str} Let $V:C\rightarrow H$ be a $\gamma $-Lipschitzian
mapping with a constant $\gamma \geq 0$ and let $F:C\rightarrow H$ be a $L$%
-Lipschitzian and $\eta $-strongly monotone operator with constants $L,\eta
>0$. Then for $0\leq \rho \gamma <\mu \eta ,$%
\begin{equation*}
\left\langle \left( \mu F-\rho V\right) x-\left( \mu F-\rho V\right)
y,x-y\right\rangle \geq \left( \mu \eta -\rho \gamma \right) \left\Vert
x-y\right\Vert ^{2},\text{ }\forall x,y\in C.
\end{equation*}%
That is, $\mu F-\rho V$ is strongly monotone with coefficient $\mu \eta
-\rho \gamma $.
\end{lemma}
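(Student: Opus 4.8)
The plan is to directly estimate the inner product $\left\langle \left( \mu F-\rho V\right) x-\left( \mu F-\rho V\right) y,x-y\right\rangle$ by splitting it additively along the definition of the operator $\mu F - \rho V$. First I would write
\begin{equation*}
\left\langle \left( \mu F-\rho V\right) x-\left( \mu F-\rho V\right) y,x-y\right\rangle = \mu \left\langle Fx-Fy,x-y\right\rangle - \rho \left\langle Vx-Vy,x-y\right\rangle ,
\end{equation*}
so that the two hypotheses can be applied to the two terms separately. This reduces the problem to bounding each piece.

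The positive contribution comes from the strong monotonicity of $F$: since $F$ is $\eta$-strongly monotone, we have $\left\langle Fx-Fy,x-y\right\rangle \geq \eta \left\Vert x-y\right\Vert^{2}$, and multiplying by $\mu > 0$ gives $\mu \left\langle Fx-Fy,x-y\right\rangle \geq \mu \eta \left\Vert x-y\right\Vert^{2}$. The negative contribution must be controlled from above in absolute value. Here I would apply the Cauchy--Schwarz inequality to get $\left\langle Vx-Vy,x-y\right\rangle \leq \left\Vert Vx-Vy\right\Vert \left\Vert x-y\right\Vert$, and then invoke the $\gamma$-Lipschitz property of $V$, namely $\left\Vert Vx-Vy\right\Vert \leq \gamma \left\Vert x-y\right\Vert$, to conclude $\rho \left\langle Vx-Vy,x-y\right\rangle \leq \rho \gamma \left\Vert x-y\right\Vert^{2}$.

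Combining the two estimates yields
\begin{equation*}
\left\langle \left( \mu F-\rho V\right) x-\left( \mu F-\rho V\right) y,x-y\right\rangle \geq \mu \eta \left\Vert x-y\right\Vert^{2} - \rho \gamma \left\Vert x-y\right\Vert^{2} = \left( \mu \eta - \rho \gamma \right) \left\Vert x-y\right\Vert^{2},
\end{equation*}
which is exactly the claimed inequality. The hypothesis $0 \leq \rho \gamma < \mu \eta$ guarantees that the coefficient $\mu \eta - \rho \gamma$ is strictly positive, so the resulting operator is genuinely strongly monotone rather than merely monotone.

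This proof is essentially a routine computation, so I do not anticipate a serious obstacle. The only point requiring minor care is the sign handling on the $V$-term: because it enters with a minus sign, the Cauchy--Schwarz step must be used to bound $\left\langle Vx-Vy,x-y\right\rangle$ from \emph{above} (so that $-\rho\left\langle Vx-Vy,x-y\right\rangle$ is bounded below), and one should note that $\rho \geq 0$ is needed for the inequality direction to be preserved when multiplying through. Beyond that bookkeeping, the result follows immediately from the two defining properties together with Cauchy--Schwarz.
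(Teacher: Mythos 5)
Your argument is correct: splitting the inner product, applying $\eta$-strong monotonicity to the $F$-term and Cauchy--Schwarz together with the $\gamma$-Lipschitz bound to the $V$-term gives exactly the stated coefficient $\mu\eta-\rho\gamma$, and your remark about needing $\rho\ge 0$ to preserve the inequality direction is the right point of care. The paper itself states this lemma without proof, citing Ceng et al.; your proof is the standard one-line computation that the cited source uses, so there is nothing further to reconcile.
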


\begin{lemma}
\cite{Yamu}\label{cont} Let $C$ be a nonempty subset of a real Hilbert space 
$H.$ Suppose that $\lambda \in \left( 0,1\right) $ and $\mu >0$. Let $%
F:C\rightarrow H$ be a $L$-Lipschitzian and $\eta $-strongly monotone
operator on $C$. Define the mapping $g:C\rightarrow H$ by%
\begin{equation*}
g:=I-\lambda \mu F
\end{equation*}%
Then, $g$ is a contraction that provided $\mu <\frac{2\eta }{L^{2}}$. More
precisely, for $\mu \in \left( 0,\frac{2\eta }{L^{2}}\right) ,$%
\begin{equation*}
\left\Vert g\left( x\right) -g\left( y\right) \right\Vert \leq \left(
1-\lambda \nu \right) \left\Vert x-y\right\Vert \text{, }\forall x,y\in C,
\end{equation*}%
where $\nu =1-\sqrt{1-\mu \left( 2\eta -\mu L^{2}\right) }$.
\end{lemma}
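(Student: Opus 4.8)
The plan is to prove the estimate by working with the squared norm and reducing the whole statement to a single quadratic inequality in $\lambda $. First I would write $g(x)-g(y)=(x-y)-\lambda \mu (Fx-Fy)$ and expand
\[
\left\Vert g(x)-g(y)\right\Vert ^{2}=\left\Vert x-y\right\Vert ^{2}-2\lambda \mu \left\langle Fx-Fy,x-y\right\rangle +\lambda ^{2}\mu ^{2}\left\Vert Fx-Fy\right\Vert ^{2}.
\]
Applying $\eta $-strong monotonicity to the middle term, namely $\left\langle Fx-Fy,x-y\right\rangle \geq \eta \left\Vert x-y\right\Vert ^{2}$, and the $L$-Lipschitz bound $\left\Vert Fx-Fy\right\Vert ^{2}\leq L^{2}\left\Vert x-y\right\Vert ^{2}$ to the last term, I obtain
\[
\left\Vert g(x)-g(y)\right\Vert ^{2}\leq \left( 1-2\lambda \mu \eta +\lambda ^{2}\mu ^{2}L^{2}\right) \left\Vert x-y\right\Vert ^{2}.
\]

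The crux is then to show that the scalar coefficient is bounded by $(1-\lambda \nu )^{2}$. Here I would use the defining relation of $\nu $: since $\nu =1-\sqrt{1-\mu (2\eta -\mu L^{2})}$, squaring gives $(1-\nu )^{2}=1-2\mu \eta +\mu ^{2}L^{2}$, equivalently $2\mu \eta -\mu ^{2}L^{2}=2\nu -\nu ^{2}$. Substituting this to eliminate $\mu \eta $ from the coefficient and completing the square in $\lambda \nu $ yields the exact identity
\[
1-2\lambda \mu \eta +\lambda ^{2}\mu ^{2}L^{2}=\left( 1-\lambda \nu \right) ^{2}+\lambda \left( 1-\lambda \right) \left( \nu ^{2}-\mu ^{2}L^{2}\right) .
\]
Since $\lambda \in \left( 0,1\right) $, the factor $\lambda (1-\lambda )$ is positive, so the correction term is nonpositive precisely when $\nu \leq \mu L$. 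I expect this inequality $\nu \leq \mu L$ to be the main obstacle, and it is the only genuinely nontrivial point in the argument.

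To clear that obstacle I would first record that an operator which is simultaneously $\eta $-strongly monotone and $L$-Lipschitz satisfies $\eta \leq L$, which follows from $\eta \left\Vert x-y\right\Vert ^{2}\leq \left\langle Fx-Fy,x-y\right\rangle \leq \left\Vert Fx-Fy\right\Vert \left\Vert x-y\right\Vert \leq L\left\Vert x-y\right\Vert ^{2}$ by the Cauchy--Schwarz inequality. Then $\nu \leq \mu L$ is equivalent to $1-\mu L\leq \sqrt{1-2\mu \eta +\mu ^{2}L^{2}}$, which holds trivially when $1-\mu L<0$, and when $1-\mu L\geq 0$ reduces, after squaring, to exactly $\eta \leq L$. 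Combining this with the displayed identity gives $\left\Vert g(x)-g(y)\right\Vert ^{2}\leq (1-\lambda \nu )^{2}\left\Vert x-y\right\Vert ^{2}$, and taking square roots yields the claimed Lipschitz estimate. Finally, to confirm $g$ is a genuine contraction I would note that $\mu <2\eta /L^{2}$ forces $\mu (2\eta -\mu L^{2})>0$, while $\eta \leq L$ bounds this quantity above by $\eta ^{2}/L^{2}\leq 1$; hence $\nu \in (0,1]$ and, with $\lambda \in (0,1)$, we get $1-\lambda \nu \in (0,1)$, completing the proof.
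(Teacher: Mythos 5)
Your argument is correct, but it is not the route the paper takes: the paper states this as a cited result of Yamada \cite{Yamu} and gives no proof at all, and the classical proof behind that citation is organized differently from yours. The standard argument first shows that $I-\mu F$ alone is a $(1-\nu)$-contraction, via
\begin{equation*}
\left\Vert (I-\mu F)x-(I-\mu F)y\right\Vert ^{2}\leq \left( 1-2\mu \eta +\mu ^{2}L^{2}\right) \left\Vert x-y\right\Vert ^{2}=\left( 1-\nu \right) ^{2}\left\Vert x-y\right\Vert ^{2},
\end{equation*}
and then writes $I-\lambda \mu F=(1-\lambda )I+\lambda (I-\mu F)$ and applies the triangle inequality to the convex combination, giving $(1-\lambda )+\lambda (1-\nu )=1-\lambda \nu $ immediately. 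That decomposition buys you a proof with no auxiliary inequality to check. Your version instead expands $\left\Vert g(x)-g(y)\right\Vert ^{2}$ directly, which forces you to compare $1-2\lambda \mu \eta +\lambda ^{2}\mu ^{2}L^{2}$ with $(1-\lambda \nu )^{2}$; your completing-the-square identity and the reduction of the resulting obstruction to $\nu \leq \mu L$, and thence to $\eta \leq L$ via Cauchy--Schwarz, are all verified correct (I checked the identity: the difference of the two sides is $\lambda \left[ (2\nu -\nu ^{2})-(2\mu \eta -\mu ^{2}L^{2})\right] =0$ by the definition of $\nu $). So what you lose in brevity you gain in making explicit exactly where $\lambda \in (0,1)$ and $\eta \leq L$ enter; both proofs are legitimate, and yours is a genuine, self-contained alternative to the cited one. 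The only cosmetic remark: the inequality $\eta \leq L$ presupposes that $C$ contains two distinct points, but when it does not the lemma is vacuous, so nothing is lost.
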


\begin{lemma}
\label{Y}\cite{xu1} Assume that $\left\{ x_{n}\right\} $ is a sequence of
nonnegative real numbers satisfying the conditions%
\begin{equation*}
x_{n+1}\leq \left( 1-\alpha _{n}\right) x_{n}+\alpha _{n}\beta _{n},\text{ }%
\forall n\geq 1
\end{equation*}%
where $\left\{ \alpha _{n}\right\} $ \ and $\left\{ \beta _{n}\right\} $ are
sequences of real numbers such that 
\begin{eqnarray*}
&\text{(i)}&\left\{ \alpha _{n}\right\} \subset \left[ 0,1\right] \text{ and 
}\tsum_{n=1}^{\infty }\alpha _{n}=\infty \text{,\ \ \ \ \ \ \ \ \ \ \ \ \ \
\ \ \ \ \ \ \ \ \ \ \ \ \ \ \ \ \ \ \ \ \ \ \ \ \ \ \ \ \ \ \ \ \ \ \ \ \ }
\\
&\text{(ii)}&\limsup_{n\rightarrow \infty }\beta _{n}\leq 0\text{.}
\end{eqnarray*}%
Then $\lim_{n\rightarrow \infty }x_{n}=0.$
\end{lemma}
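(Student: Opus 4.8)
The plan is to establish that $\limsup_{n\to\infty}x_{n}\le \varepsilon$ for every $\varepsilon>0$; since each $x_{n}\ge 0$, this immediately forces $\lim_{n\to\infty}x_{n}=0$. The whole argument is an iteration of the one-step recurrence together with a standard product estimate, so the real content is bookkeeping rather than a deep idea.

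First I would fix $\varepsilon>0$ and use condition (ii). Because $\limsup_{n\to\infty}\beta_{n}\le 0$, there is an index $N$ with $\beta_{n}\le \varepsilon$ for all $n\ge N$. For such $n$, since $\alpha_{n}\ge 0$, the hypothesis yields $x_{n+1}\le (1-\alpha_{n})x_{n}+\alpha_{n}\varepsilon$. Subtracting $\varepsilon$ from both sides and factoring gives the clean recurrence $x_{n+1}-\varepsilon\le (1-\alpha_{n})(x_{n}-\varepsilon)$. As $\alpha_{n}\in[0,1]$, each factor $1-\alpha_{n}$ is nonnegative, so I may iterate this estimate from $N$ up to $n$ to obtain
\[
x_{n+1}-\varepsilon\le \left(\prod_{k=N}^{n}(1-\alpha_{k})\right)(x_{N}-\varepsilon).
\]

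The one analytic point is to drive the product to $0$. Using the elementary inequality $1-t\le e^{-t}$ for $t\ge 0$, I get $\prod_{k=N}^{n}(1-\alpha_{k})\le \exp\!\left(-\sum_{k=N}^{n}\alpha_{k}\right)$, and condition (i), namely $\sum_{k=1}^{\infty}\alpha_{k}=\infty$, makes the right-hand side tend to $0$ as $n\to\infty$. Hence the product tends to $0$ regardless of the sign of $x_{N}-\varepsilon$, and letting $n\to\infty$ in the displayed inequality gives $\limsup_{n\to\infty}x_{n}\le \varepsilon$. Since $\varepsilon>0$ was arbitrary we deduce $\limsup_{n\to\infty}x_{n}\le 0$, and combining this with $x_{n}\ge 0$ gives $\lim_{n\to\infty}x_{n}=0$.

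I do not anticipate a genuine obstacle; the only step needing care is the passage $\sum\alpha_{k}=\infty\Rightarrow\prod(1-\alpha_{k})\to 0$, which is precisely where the exponential inequality is invoked, and the small remark that the conclusion is insensitive to the sign of $x_{N}-\varepsilon$ because the bounding product vanishes in the limit anyway.
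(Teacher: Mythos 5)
Your proof is correct. The paper itself does not prove this lemma --- it is quoted from the cited reference (Xu and Kim) and used as a black box --- so there is no internal argument to compare against; your proof (subtract $\varepsilon$, telescope the contraction factors, and kill the product $\prod(1-\alpha_{k})$ via $1-t\le e^{-t}$ and $\sum\alpha_{k}=\infty$) is precisely the standard argument given in that reference, and your handling of the sign of $x_{N}-\varepsilon$ is the right way to close the one small gap that is sometimes glossed over.
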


For solving an equilibrium problem for a bifunction $G:C\times C\rightarrow 
\mathbb{R}
,$ let assume that $G$ satisfies the following conditions:

\begin{enumerate}
\item[(A1)] $G\left( x,x\right) =0,$ $\forall x\in C,$

\item[(A2)] $G$ is monotone, i.e. $G\left( x,y\right) +G\left( y,x\right)
\leq 0,$ $\forall x,y\in C,$

\item[(A3)] $\forall x,y,z\in C,$%
\begin{equation*}
\lim_{t\rightarrow 0^{+}}G\left( tz+\left( 1-t\right) x,y\right) \leq
G\left( x,y\right) ,
\end{equation*}

\item[(A4)] $\forall x\in C,$ $y\longmapsto G\left( x,y\right) $ is convex
and lower semicontinuous.

\item[(B1)] For each $x\in H$ and $r>0$, there exist a bounded subset $%
D_{x}\subseteq C$ and $y_{x}\in C$ such that for any $z\in C\backslash D_{x}$%
\begin{equation*}
G\left( z,y_{x}\right) +\varphi \left( y_{x}\right) -\varphi \left( z\right)
+\frac{1}{r}\left\langle y_{x}-z,z-x\right\rangle <0;
\end{equation*}

\item[(B2)] $C$ is a bounded set.
\end{enumerate}

\begin{lemma}
\label{7}\cite{PY} Let $C$ be a nonempty closed convex subset of a H\.{I}%
lbert space $H$. Let $G:C\times C\rightarrow 
\mathbb{R}
$ be a bifunction satisfying (A1)-(A4) and let $\varphi :C\rightarrow 
\mathbb{R}
$ be a proper lower semicontinuous and convex function. Assume that either
(B1) or (B2) holds. For $r>0$ and $x\in H,$ define a mapping $%
T_{r}:H\rightarrow C$ as follows:%
\begin{equation*}
T_{r}\left( x\right) =\left\{ z\in C:G\left( z,y\right) +\varphi \left(
y\right) -\varphi \left( z\right) +\frac{1}{r}\left\langle
y-z,z-x\right\rangle \geq 0,\text{ }\forall y\in C\right\}
\end{equation*}%
for all $x\in H.$ Then, the following hold:

\begin{enumerate}
\item[(1)] For each $x\in H$, $T_{r}\left( x\right) \neq \emptyset $

\item[(2)] $T_{r}$ is single valued,

\item[(3)] $T_{r}$ is firmly nonexpansive i.e.%
\begin{equation*}
\left\Vert T_{r}x-T_{r}y\right\Vert ^{2}\leq \left\langle
T_{r}x-T_{r}y,x-y\right\rangle ,\text{ }\forall x,y\in H,
\end{equation*}

\item[(4)] $Fix\left( T_{r}\right) =MEP\left( G\right) ,$

\item[(5)] $MEP\left( G\right) $ is closed and convex.
\end{enumerate}
\end{lemma}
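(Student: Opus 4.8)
The plan is to treat the five assertions in the natural order of difficulty, with the existence claim (1) as the analytic core and the remaining parts (2)--(5) as algebraic consequences of the monotonicity assumption (A2) together with firm nonexpansiveness. Throughout, for fixed $x\in H$ and $r>0$ I would introduce the perturbed bifunction $\Phi(z,y):=G(z,y)+\varphi(y)-\varphi(z)+\frac{1}{r}\langle y-z,z-x\rangle$ on $C\times C$ and observe that $z\in T_r(x)$ is precisely the statement that $z$ solves the equilibrium problem for $\Phi$. I would check that $\Phi$ inherits analogues of (A1)--(A4): $\Phi(z,z)=0$ by (A1); $\Phi$ is monotone because $G$ is monotone by (A2) and the inner-product perturbation contributes $\frac{1}{r}\langle y-z,z-x\rangle+\frac{1}{r}\langle z-y,y-x\rangle=-\frac{1}{r}\|y-z\|^{2}\le 0$ while the $\varphi$-terms cancel; upper hemicontinuity in the first variable follows from (A3) and continuity of the linear perturbation; and convexity and lower semicontinuity of $y\mapsto\Phi(z,y)$ follow from (A4) and convexity of $\varphi$.

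For part (1) the plan is to invoke a KKM-type existence theorem for equilibrium problems. I would define the set-valued map $y\mapsto\{z\in C:\Phi(z,y)\ge 0\}$, verify it is a KKM map using the monotonicity and convexity just established, and check closedness of its values via the lower semicontinuity in (A4). The difficulty is entirely in the case where $C$ is unbounded: there the intersection of the KKM values may be empty, and this is exactly where the coercivity hypothesis (B1) (or the boundedness hypothesis (B2)) enters, guaranteeing that for $z$ outside the bounded set $D_x$ the defining inequality fails, so that any solution must lie in $D_x$; a compactness/finite-intersection argument on $D_x$ then yields a nonempty solution set. I expect this existence step, and the correct bookkeeping of (B1) to localize solutions, to be the main obstacle; the rest is routine.

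For parts (2) and (3) I would write the defining inequality at two points. Given $z_i=T_r(x_i)$ for $i=1,2$, testing the inequality for $z_1$ with $y=z_2$ and for $z_2$ with $y=z_1$, then adding, the $\varphi$-terms cancel and the monotonicity $G(z_1,z_2)+G(z_2,z_1)\le 0$ from (A2) gives $\frac{1}{r}\langle z_2-z_1,(z_1-z_2)-(x_1-x_2)\rangle\ge 0$, whence $\|z_1-z_2\|^{2}\le\langle z_1-z_2,x_1-x_2\rangle$. Taking $x_1=x_2$ forces $z_1=z_2$, proving single-valuedness (part 2), and the inequality itself is exactly the firm nonexpansiveness asserted in part (3).

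For part (4) I would note that $z\in Fix(T_r)$ means $z=T_r(z)$, and substituting $x=z$ annihilates the term $\frac{1}{r}\langle y-z,z-x\rangle$, leaving $G(z,y)+\varphi(y)-\varphi(z)\ge 0$ for all $y\in C$, which is the definition of $MEP(G,\varphi)$; the reverse inclusion is immediate by the same substitution, giving $Fix(T_r)=MEP(G,\varphi)$. Finally, for part (5), since $T_r$ is firmly nonexpansive it is in particular nonexpansive, and the fixed-point set of a nonexpansive self-mapping on a Hilbert space is closed and convex; combined with part (4) this shows $MEP(G,\varphi)$ is closed and convex, completing the proof.
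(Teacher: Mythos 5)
The paper itself offers no proof of this lemma: it is imported verbatim from Peng and Yao \cite{PY}, so your attempt has to be measured against the standard argument in the literature. Your parts (2)--(5) reproduce that argument correctly: testing the defining inequality of $z_1=T_r(x_1)$ at $y=z_2$ and of $z_2=T_r(x_2)$ at $y=z_1$, adding, and using (A2) gives $\left\Vert z_1-z_2\right\Vert^2\leq\left\langle z_1-z_2,x_1-x_2\right\rangle$, which yields single-valuedness and firm nonexpansiveness at once; part (4) is the substitution $x=z$ you describe (the lemma's ``$MEP(G)$'' should indeed be read as $MEP(G,\varphi)$); and part (5) follows because the fixed-point set of a nonexpansive map is closed and convex. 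Your verification that the perturbed bifunction $\Phi$ is monotone, with defect $-\frac{1}{r}\left\Vert y-z\right\Vert^2$, is also correct.

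The genuine gap is in the existence step (1). You propose to apply a KKM theorem to $y\mapsto F(y):=\left\{z\in C:\Phi(z,y)\geq 0\right\}$ and to ``check closedness of its values via the lower semicontinuity in (A4).'' This cannot work as stated: (A4) controls the \emph{second} variable of $G$, whereas closedness of $F(y)$ would require upper semicontinuity of $z\mapsto\Phi(z,y)$ in the \emph{first} variable, which is nowhere assumed (only the hemicontinuity (A3) along segments is available); moreover Fan's lemma in infinite dimensions needs weakly compact, hence convex, values, and $F(y)$ is not obviously convex in $z$. The missing idea is the Minty dualization: by the monotonicity you verified, $F(y)\subseteq F'(y):=\left\{z\in C:\Phi(y,z)\leq 0\right\}$, and $F'(y)$ \emph{is} closed and convex by (A4) together with the convexity and lower semicontinuity of $\varphi$. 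One applies KKM to $F'$ (it inherits the KKM property from $F$, which holds by (A1) and convexity in the second variable, not by monotonicity), uses (B1) or (B2) to localize to a weakly compact set and obtain $z^{\ast}\in\bigcap_{y\in C}F'(y)$, and then recovers $\Phi(z^{\ast},y)\geq 0$ for all $y$ from $0=\Phi(y_t,y_t)\leq t\Phi(y_t,y)+(1-t)\Phi(y_t,z^{\ast})\leq t\Phi(y_t,y)$ with $y_t=ty+(1-t)z^{\ast}$, letting $t\rightarrow 0^{+}$ and invoking (A3). This is the same Minty device the paper itself uses in Step 1 of the proof of Theorem \ref{X}; with it inserted, your outline becomes a correct proof.
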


Now, we give the definitions of a demicontinuous mapping, asymptotic radius
and asmptotic center.

Let $C$ be a nonempty subset of a Banach space $X$ and $T:C\rightarrow C$ be
a mapping. For a sequence $\left\{ x_{n}\right\} $ in $C$ which converges
strongly to $x\in X$, if $\left\{ Tx_{n}\right\} $ converges weakly to $Tx$,
then $T$ is called demicontinuous.

Let $C$ be a nonempty closed convex subset of a uniformly convex Banach
space $X$, $\left\{ x_{n}\right\} $ be a bounded sequence in $X$ and $%
r:C\rightarrow \left[ 0,\infty \right) $ be a functional defined by%
\begin{equation*}
r\left( x\right) =\limsup_{n\rightarrow \infty }\left\Vert
x_{n}-x\right\Vert ,\text{ }x\in C.
\end{equation*}%
The infimum of $r\left( \cdot \right) $ over $C$ is called asymptotic radius
of $\left\{ x_{n}\right\} $ with respect to $C$ and is denoted by $r\left(
C,\left\{ x_{n}\right\} \right) $. A point $w\in C$ is said to be an
asymptotic center of the sequence $\left\{ x_{n}\right\} $ with respect to $%
C $ if%
\begin{equation*}
r\left( w\right) =\min \left\{ r\left( x\right) :x\in C\right\} .
\end{equation*}%
The set of all asymptotic centers is denoted by $A\left( C,\left\{
x_{n}\right\} \right) $. Related with these definitions, we will use the
followings in our main results.

\begin{theorem}
\label{C}\cite{AOS2} Let $C$ be a nonempty closed convex subset of a
uniformly convex Banach space $X$ satisfying the Opial condition. If $%
\left\{ x_{n}\right\} $ is a sequence in $C$ such that $x_{n}\rightharpoonup 
$ $w$, then $w$ is the asymptotic center of $\left\{ x_{n}\right\} $ in $C$.
\end{theorem}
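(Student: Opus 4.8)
The plan is to prove the stronger statement that $w$ is the \emph{unique} minimizer over $C$ of the asymptotic radius functional $r(x)=\limsup_{n\to\infty}\|x_n-x\|$; by the definition of asymptotic center this gives $A(C,\{x_n\})=\{w\}$, and in particular that $w$ is the asymptotic center of $\{x_n\}$ in $C$.

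First I would dispose of the routine preliminaries. A weakly convergent sequence is norm bounded, so $\{x_n\}$ is bounded and $r(x)<\infty$ for every $x\in C$; thus $r$ is a genuine functional $C\to[0,\infty)$. Since $C$ is closed and convex it is weakly closed (Mazur's theorem), and $x_n\in C$ with $x_n\rightharpoonup w$ forces $w\in C$, so $w$ is a legitimate candidate and $r(w)$ is finite. It then remains to show $r(w)<r(x)$ for every $x\in C$ with $x\neq w$.

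The heart of the argument is the Opial condition. Fix $x\in C$ with $x\neq w$. Since the $\limsup$ defining $r(w)$ need not be a genuine limit, I would first extract a subsequence $\{x_{n_k}\}$ with $\|x_{n_k}-w\|\to\limsup_{n\to\infty}\|x_n-w\|=r(w)$. This subsequence still satisfies $x_{n_k}\rightharpoonup w$, so the Opial condition applies to it and yields
$$\liminf_{k\to\infty}\|x_{n_k}-w\|<\liminf_{k\to\infty}\|x_{n_k}-x\|.$$
By the choice of the subsequence the left-hand side equals $r(w)$, while the right-hand side is at most $\limsup_{n\to\infty}\|x_n-x\|=r(x)$. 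Hence $r(w)<r(x)$, and since $x\neq w$ was arbitrary, $w$ is the strict, hence unique, minimizer of $r$ over $C$, which is exactly $A(C,\{x_n\})=\{w\}$.

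The step I expect to be the main obstacle is precisely this passage between the two formulations of the inequality: the Opial condition controls $\liminf$, whereas the asymptotic radius is defined through $\limsup$, so one cannot simply read off $r(w)<r(x)$. Choosing the subsequence along which $\|x_{n_k}-w\|$ attains the $\limsup$ is what reconciles the two. The uniform convexity of $X$ plays only the background role of guaranteeing that the asymptotic center notion is well posed for bounded sequences relative to a closed convex set, so that the phrase ``$w$ is the asymptotic center'' is meaningful; the identification of that center with $w$ comes entirely from the Opial condition.
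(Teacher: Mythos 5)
Your proof is correct. The paper does not prove this statement at all --- Theorem~\ref{C} is imported verbatim from the book \cite{AOS2} --- so there is no internal argument to compare against; judged on its own, your write-up is the standard and complete proof. The two points that actually need care are both handled properly: (i) passing to a subsequence along which $\left\Vert x_{n_{k}}-w\right\Vert$ converges to $\limsup_{n}\left\Vert x_{n}-w\right\Vert$, so that the $\liminf$ appearing in Opial's condition coincides with $r(w)$, and then bounding $\liminf_{k}\left\Vert x_{n_{k}}-x\right\Vert \leq \limsup_{n}\left\Vert x_{n}-x\right\Vert =r(x)$; and (ii) checking $w\in C$ via weak closedness of the closed convex set $C$, so that $w$ is an admissible competitor and the strict inequality $r(w)<r(x)$ for all $x\in C\setminus\{w\}$ indeed yields $A\left( C,\left\{ x_{n}\right\} \right) =\left\{ w\right\}$, which is the form in which the result is used in Lemma~\ref{b}. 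Your closing remark is also accurate: uniform convexity is not needed for this particular identification (it only guarantees existence and uniqueness of asymptotic centers for general bounded sequences); here the Opial condition alone pins the center down.
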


\begin{lemma}
\label{b}\cite{sahu1} Let $C$ be a nonempty closed convex subset of a
uniformly convex Banach space $X$ and $T:C\rightarrow C$ be a demicontinuous
nearly Lipschitzian mapping with sequence $\left\{ a_{n},\eta \left(
T^{n}\right) \right\} $ such that $\lim_{n\rightarrow \infty }\eta \left(
T^{n}\right) \leq 1$. If $\left\{ x_{n}\right\} $ is a bounded sequence in $%
C $ such that%
\begin{equation*}
\lim_{m\rightarrow \infty }\left( \lim_{n\rightarrow \infty }\left\Vert
x_{n}-T^{m}x_{n}\right\Vert \right) =0\text{ and }A\left( C,\left\{
x_{n}\right\} \right) =\left\{ w\right\} ,
\end{equation*}%
then $w$ is a fixed point of $T$.
\end{lemma}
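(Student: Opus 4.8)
The plan is to exploit the uniqueness of the asymptotic center together with the nearly Lipschitzian estimate in order to force the orbit $\left\{ T^{m}w\right\} $ to be a minimizing sequence for the radius functional $r$, then to upgrade this to strong convergence $T^{m}w\rightarrow w$ via the uniform convexity of $X$, and finally to close the loop using demicontinuity. Throughout, write $d:=r\left( w\right) =r\left( C,\left\{ x_{n}\right\} \right) $, which by hypothesis $A\left( C,\left\{ x_{n}\right\} \right) =\left\{ w\right\} $ is the minimal value of $r\left( \cdot \right) $ over $C$.

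First I would estimate $r\left( T^{m}w\right) $ for each fixed $m$. By the triangle inequality and the nearly Lipschitzian property of $T$,
\begin{equation*}
\left\Vert x_{n}-T^{m}w\right\Vert \leq \left\Vert x_{n}-T^{m}x_{n}\right\Vert +\left\Vert T^{m}x_{n}-T^{m}w\right\Vert \leq \left\Vert x_{n}-T^{m}x_{n}\right\Vert +\eta \left( T^{m}\right) \left( \left\Vert x_{n}-w\right\Vert +a_{m}\right) .
\end{equation*}
Taking $\limsup_{n\rightarrow \infty }$, and using that the inner limit $\lim_{n\rightarrow \infty }\left\Vert x_{n}-T^{m}x_{n}\right\Vert $ exists for each $m$, yields
\begin{equation*}
r\left( T^{m}w\right) \leq \lim_{n\rightarrow \infty }\left\Vert x_{n}-T^{m}x_{n}\right\Vert +\eta \left( T^{m}\right) \left( d+a_{m}\right) .
\end{equation*}
Now let $m\rightarrow \infty $: the first term vanishes by assumption, $a_{m}\rightarrow 0$, and $\lim_{m\rightarrow \infty }\eta \left( T^{m}\right) \leq 1$ together with $d\geq 0$ give $\limsup_{m\rightarrow \infty }r\left( T^{m}w\right) \leq d$. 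Since $d$ is the minimum of $r$ on $C$, also $r\left( T^{m}w\right) \geq d$ for every $m$, so $\lim_{m\rightarrow \infty }r\left( T^{m}w\right) =d$; that is, $\left\{ T^{m}w\right\} $ is a minimizing sequence for $r$.

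The main obstacle is the passage from this minimizing property to strong convergence $T^{m}w\rightarrow w$, which is exactly where uniform convexity of $X$ is needed. I would argue that in a uniformly convex space a minimizing sequence of the convex functional $r$ must converge to the unique asymptotic center $w$. Concretely, applying the standard modulus-of-convexity estimate $\left\Vert \tfrac{u+v}{2}\right\Vert \leq R\left( 1-\delta \left( \varepsilon /R\right) \right) $ with $u=x_{n}-T^{m}w$ and $v=x_{n}-w$, so that $\tfrac{u+v}{2}=x_{n}-\tfrac{T^{m}w+w}{2}$ and $u-v=w-T^{m}w$, one finds that if $\left\Vert T^{m}w-w\right\Vert \geq \varepsilon $ for some fixed $\varepsilon >0$ and all large $m$, then choosing $R$ slightly larger than $d$ (legitimate since $\limsup_{n}\left\Vert x_{n}-w\right\Vert =d$ and $r\left( T^{m}w\right) \rightarrow d$) gives $r\bigl( \tfrac{T^{m}w+w}{2}\bigr) \leq R\left( 1-\delta \left( \varepsilon /R\right) \right) <d$ for large $m$, contradicting that $d$ is the minimum of $r$ on $C$. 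Hence $\left\Vert T^{m}w-w\right\Vert \rightarrow 0$. (Alternatively this step may simply be quoted as the known fact that minimizing sequences of the asymptotic radius functional converge to the asymptotic center in a uniformly convex space.)

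Finally I would invoke demicontinuity to conclude. Setting $z_{m}=T^{m}w$, we have $z_{m}\rightarrow w$ strongly, so the shifted sequence $z_{m+1}=T^{m+1}w\rightarrow w$ strongly as well, and in particular $z_{m+1}\rightharpoonup w$. On the other hand, since $z_{m}\rightarrow w$ strongly and $T$ is demicontinuous, $Tz_{m}\rightharpoonup Tw$, i.e. $z_{m+1}=Tz_{m}\rightharpoonup Tw$. By uniqueness of weak limits, $Tw=w$, so $w\in Fix\left( T\right) $, which completes the proof.
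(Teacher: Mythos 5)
Your argument is correct. Note that the paper does not prove this lemma at all --- it is quoted verbatim from the cited reference (Sahu, \emph{Comment. Math. Univ. Carolin.} 46 (2005)), and your three steps (showing $r(T^m w)\rightarrow d$ via the nearly Lipschitzian estimate, forcing $T^m w\rightarrow w$ by uniform convexity and the minimality of $d$, then closing with demicontinuity and uniqueness of weak limits) reconstruct essentially that source's argument. The only point worth a parenthetical in a write-up is the degenerate case $d=0$, where the modulus-of-convexity step is unavailable (one cannot make $R(1-\delta(\varepsilon/R))<0$) but is also unnecessary, since then $\Vert T^m w - w\Vert \leq r(T^m w) + r(w) \rightarrow 0$ directly by the triangle inequality.
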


\section{Main Results}

Let $C$ be a nonempty closed convex subset of a real Hilbert space $H.$ Let $%
A,B:C\rightarrow H$ be $\alpha ,\theta $-inverse strongly monotone mappings,
respectively. Let $G:C\times C\rightarrow 
\mathbb{R}
$ be a bifunction satisfying assumptions (A1)-(A4), $\varphi :C\rightarrow 
\mathbb{R}
$ be a lower semicontinuous and convex function, $S:C\rightarrow H$ be a
nonexpansive and $T$ be a demicontinuous nearly nonexpansive mapping with
the sequence $\left\{ a_{n}\right\} $ such that $\tciFourier :=Fix\left(
T\right) \cap \Omega \cap GMEP\left( G,\varphi ,B\right) \neq \emptyset $.
Let $V:C\rightarrow H$ be a $\gamma $-Lipschitzian mapping, $F:C\rightarrow
H $ be a $L$-Lipschitzian and $\eta $-strongly monotone operator such that
these coefficients satisfy $0<\mu <\frac{2\eta }{L^{2}}$, $0\leq \rho \gamma
<\nu $, where $\nu =1-\sqrt{1-\mu \left( 2\eta -\mu L^{2}\right) }$. Assume
that either (B1) or (B2) holds. For an arbitrarily initial value $x_{1},$
define the sequence $\left\{ x_{n}\right\} $ in $C$ generated by%
\begin{equation}
\left\{ 
\begin{array}{l}
G\left( u_{n},y\right) +\varphi \left( y\right) -\varphi \left( u_{n}\right)
+\left\langle Bx_{n},y-u_{n}\right\rangle +\frac{1}{r_{n}}\left\langle
y-u_{n},u_{n}-x_{n}\right\rangle \geq 0\text{, }\forall y\in C\text{\ \ \ \
\ \ } \\ 
z_{n}=P_{C}\left( u_{n}-\lambda _{n}Au_{n}\right) , \\ 
y_{n}=P_{C}\left[ \beta _{n}Sx_{n}+\left( 1-\beta _{n}\right) z_{n}\right] ,%
\text{ } \\ 
x_{n+1}=P_{C}\left[ \alpha _{n}\rho Vx_{n}+\left( I-\alpha _{n}\mu F\right)
T^{n}y_{n}\right] ,\text{ }n\geq 1,%
\end{array}%
\right.  \label{4}
\end{equation}%
where $\left\{ \lambda _{n}\right\} \subset \left( 0,2\alpha \right) $, $%
\left\{ r_{n}\right\} \subset \left( 0,2\theta \right) $, $\left\{ \alpha
_{n}\right\} $ and $\left\{ \beta _{n}\right\} $ are sequences in $\left[ 0,1%
\right] $.

It is known that convergence of a sequence\ depends on the choice of the
control sequences and mappings. So, we consider the following hypotheses on
our control sequences and mappings:%
\begin{eqnarray*}
&(C1)&\lim_{n\rightarrow \infty }\alpha _{n}=0\text{ and }%
\tsum_{n=1}^{\infty }\alpha _{n}=\infty \text{;} \\
&(C2)&\lim_{n\rightarrow \infty }\frac{a_{n}}{\alpha _{n}}=0\text{, }%
\lim_{n\rightarrow \infty }\frac{\beta _{n}}{\alpha _{n}}=0\text{, }%
\lim_{n\rightarrow \infty }\frac{\left\vert \alpha _{n}-\alpha
_{n-1}\right\vert }{\alpha _{n}}=0,\lim_{n\rightarrow \infty }\frac{%
\left\vert \lambda _{n}-\lambda _{n-1}\right\vert }{\alpha _{n}}=0 \\
&&\lim_{n\rightarrow \infty }\frac{\left\vert \beta _{n}-\beta
_{n-1}\right\vert }{\alpha _{n}}=0\text{, and }\lim_{n\rightarrow \infty }%
\frac{\left\vert r_{n}-r_{n-1}\right\vert }{\alpha _{n}}=0\text{;} \\
&(C3)&\lim_{n\rightarrow \infty }\left\Vert T^{n}x-T^{n-1}x\right\Vert =0%
\text{ and }\lim_{n\rightarrow \infty }\frac{\left\Vert
T^{n}x-T^{n-1}x\right\Vert }{\alpha _{n}}=0,\forall x\in C\text{. \ \ }
\end{eqnarray*}%
Before giving the main theorem, we have to prove the following lemmas.

\begin{lemma}
\label{L1} Assume that the conditions $(C1)$ and $(C2)$\ hold. Let $p\in
\tciFourier $. Then, the sequences $\left\{ x_{n}\right\} ,\left\{
y_{n}\right\} ,\left\{ z_{n}\right\} $ and $\left\{ u_{n}\right\} $
generated by (\ref{4}) are bounded.
\end{lemma}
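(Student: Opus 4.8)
The plan is to show boundedness of $\{x_n\}$ by a standard induction argument, proving that all four sequences stay inside a ball of fixed radius around an arbitrary $p\in\tciFourier$. The key reductions are: first control $\|u_n-p\|$, then $\|z_n-p\|$, then $\|y_n-p\|$, and finally close the loop with $\|x_{n+1}-p\|$.

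\medskip

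First I would exploit the structure of the point $p\in\tciFourier=Fix(T)\cap\Omega\cap GMEP(G,\varphi,B)$. Since $p\in GMEP(G,\varphi,B)$ and $u_n=T_{r_n}(x_n-r_nBx_n)$ in the sense of \lemref{7}, the firm nonexpansiveness of $T_{r_n}$ (part (3) of \lemref{7}) together with $p=T_{r_n}(p-r_nBp)$ gives a contraction-type estimate. Because $B$ is $\theta$-inverse strongly monotone and $r_n\in(0,2\theta)$, the map $I-r_nB$ is nonexpansive, so I expect to obtain
\begin{equation*}
\|u_n-p\|\le\|x_n-p\|.
\end{equation*}
The same inverse-strong-monotonicity idea applied to $A$ with $\lambda_n\in(0,2\alpha)$ shows $I-\lambda_nA$ is nonexpansive on the relevant points, and since $p=P_C(p-\lambda_nAp)$ (as $p\in\Omega=VI(C,A)$) and $P_C$ is nonexpansive, I get
\begin{equation*}
\|z_n-p\|\le\|u_n-p\|\le\|x_n-p\|.
\end{equation*}
For $y_n$, using nonexpansiveness of $P_C$, the convex combination, and $S$ nonexpansive, I would bound
\begin{equation*}
\|y_n-p\|\le\beta_n\|Sx_n-p\|+(1-\beta_n)\|z_n-p\|\le\|x_n-p\|+\beta_n\|Sp-p\|.
\end{equation*}

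\medskip

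The main step is the last line. Writing $w_n=\alpha_n\rho Vx_n+(I-\alpha_n\mu F)T^ny_n$, I would insert $\pm\,\alpha_n\rho Vp$ and $\pm\,(I-\alpha_n\mu F)p$ and use \lemref{cont}: the operator $I-\alpha_n\mu F$ is a contraction with constant $1-\alpha_n\nu$ (valid since $\mu<2\eta/L^2$), so $\|(I-\alpha_n\mu F)T^ny_n-(I-\alpha_n\mu F)p\|\le(1-\alpha_n\nu)\|T^ny_n-p\|$. Since $T$ is \emph{nearly} nonexpansive with sequence $\{a_n\}$ and $p\in Fix(T)$, I have $\|T^ny_n-p\|=\|T^ny_n-T^np\|\le\|y_n-p\|+a_n$. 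Combining with the Lipschitz bound $\|\rho Vx_n-\mu Fp\|\le\rho\gamma\|x_n-p\|+\|(\rho V-\mu F)p\|$ and the $y_n$-estimate yields, after using $P_Cp=p$,
\begin{equation*}
\|x_{n+1}-p\|\le(1-\alpha_n(\nu-\rho\gamma))\|x_n-p\|+\alpha_n(\nu-\rho\gamma)M_n,
\end{equation*}
where $M_n$ collects the terms $\dfrac{\|(\rho V-\mu F)p\|}{\nu-\rho\gamma}$, $\dfrac{a_n}{\alpha_n}\cdot\dfrac{1}{\nu-\rho\gamma}$, $\dfrac{\beta_n}{\alpha_n}\cdot\dfrac{\|Sp-p\|}{\nu-\rho\gamma}$. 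The crucial point is that $\nu-\rho\gamma>0$ by the standing assumption $0\le\rho\gamma<\nu$, so the coefficient $1-\alpha_n(\nu-\rho\gamma)$ lies in $[0,1)$, and conditions $(C2)$ force $a_n/\alpha_n\to0$ and $\beta_n/\alpha_n\to0$, making $\{M_n\}$ bounded.

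\medskip

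The hardest part is bookkeeping in this final inequality, specifically verifying that the perturbation terms from near-nonexpansiveness ($a_n$) and from the $S$-step ($\beta_n\|Sp-p\|$) are absorbed cleanly; this is exactly where hypotheses $(C2)$ are used, and getting a uniform bound $M:=\sup_n M_n<\infty$ requires those ratios to be bounded (not merely convergent to $0$, but convergence to $0$ certainly suffices). With $\|x_{n+1}-p\|\le\max\{\|x_n-p\|,M\}$ established by a one-step induction, it follows that $\{x_n-p\}$ is bounded, hence $\{x_n\}$ is bounded, and then the chain $\|u_n-p\|\le\|x_n-p\|$, $\|z_n-p\|\le\|x_n-p\|$, and $\|y_n-p\|\le\|x_n-p\|+\beta_n\|Sp-p\|$ immediately gives boundedness of $\{u_n\}$, $\{z_n\}$ and $\{y_n\}$, completing the proof.
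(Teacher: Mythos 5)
Your proposal is correct and follows essentially the same route as the paper: the chain $\|u_n-p\|\le\|x_n-p\|$, $\|z_n-p\|\le\|u_n-p\|$, $\|y_n-p\|\le\|x_n-p\|+\beta_n\|Sp-p\|$ via nonexpansiveness of $T_{r_n}$, $I-r_nB$, $I-\lambda_nA$ and $P_C$, followed by the contraction estimate of Lemma~\ref{cont} together with near-nonexpansiveness of $T$ and the ratios $a_n/\alpha_n$, $\beta_n/\alpha_n$ from $(C2)$ to close the induction. No substantive differences from the paper's argument.
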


\begin{proof}
It is easy to see that the mappings $I-r_{n}B$ and $I-\lambda _{n}A$ are
nonexpansive. Indeed, since $\left\{ r_{n}\right\} \subset \left( 0,2\theta
\right) $, we have%
\begin{eqnarray*}
\left\Vert \left( I-r_{n}B\right) x-\left( I-r_{n}B\right) y\right\Vert ^{2}
&=&\left\Vert x-y\right\Vert ^{2}-2r_{n}\left\langle x-y,Bx-By\right\rangle
+r_{n}^{2}\left\Vert Bx-By\right\Vert ^{2} \\
&\leq &\left\Vert x-y\right\Vert ^{2}-2r_{n}\theta \left\Vert
Bx-By\right\Vert +r_{n}^{2}\left\Vert Bx-By\right\Vert ^{2} \\
&\leq &\left\Vert x-y\right\Vert ^{2}-r_{n}\left( 2\theta -r_{n}\right)
\left\Vert Bx-By\right\Vert ^{2} \\
&\leq &\left\Vert x-y\right\Vert ^{2}.
\end{eqnarray*}%
Similarly, since $\left\{ \lambda _{n}\right\} \subset \left( 0,2\alpha
\right) $,\ we have%
\begin{eqnarray*}
\left\Vert \left( I-\lambda _{n}A\right) x-\left( I-\lambda _{n}A\right)
y\right\Vert ^{2} &=&\left\Vert x-y\right\Vert ^{2}-2\lambda
_{n}\left\langle x-y,Ax-Ay\right\rangle +\lambda _{n}^{2}\left\Vert
Ax-Ay\right\Vert ^{2} \\
&\leq &\left\Vert x-y\right\Vert ^{2}-\lambda _{n}\left( 2\alpha -\lambda
_{n}\right) \left\Vert Ax-Ay\right\Vert ^{2} \\
&\leq &\left\Vert x-y\right\Vert ^{2}.
\end{eqnarray*}%
It follows from Lemma \ref{7} that $u_{n}=T_{r_{n}}\left(
x_{n}-r_{n}Bx_{n}\right) $. Let $p\in \tciFourier $. So, we get $%
p=T_{r_{n}}\left( p-r_{n}Bp\right) $ and%
\begin{eqnarray}
\left\Vert u_{n}-p\right\Vert ^{2} &=&\left\Vert T_{r_{n}}\left(
x_{n}-r_{n}Bx_{n}\right) -T_{r_{n}}\left( p-r_{n}Bp\right) \right\Vert ^{2} 
\notag \\
&\leq &\left\Vert \left( x_{n}-r_{n}Bx_{n}\right) -\left( p-r_{n}Bp\right)
\right\Vert ^{2}  \notag \\
&\leq &\left\Vert x_{n}-p\right\Vert ^{2}-r_{n}\left( 2\theta -r_{n}\right)
\left\Vert Bx_{n}-Bp\right\Vert ^{2}  \notag \\
&\leq &\left\Vert x_{n}-p\right\Vert ^{2}.  \label{A1}
\end{eqnarray}%
By using (\ref{A1}), we obtain%
\begin{eqnarray}
\left\Vert z_{n}-p\right\Vert ^{2} &=&\left\Vert P_{C}\left( u_{n}-\lambda
_{n}Au_{n}\right) -P_{C}\left( p-\lambda _{n}Ap\right) \right\Vert ^{2} 
\notag \\
&\leq &\left\Vert u_{n}-p-\lambda _{n}\left( Au_{n}-Ap\right) \right\Vert
^{2}  \notag \\
&\leq &\left\Vert u_{n}-p\right\Vert ^{2}-\lambda _{n}\left( 2\alpha
-\lambda _{n}\right) \left\Vert Au_{n}-Ap\right\Vert ^{2}  \notag \\
&\leq &\left\Vert u_{n}-p\right\Vert ^{2}  \notag \\
&\leq &\left\Vert x_{n}-p\right\Vert ^{2}\text{.}  \label{A2}
\end{eqnarray}%
So, from (\ref{A2}), we have%
\begin{eqnarray}
\left\Vert y_{n}-p\right\Vert &=&\left\Vert P_{C}\left[ \beta
_{n}Sx_{n}+\left( 1-\beta _{n}\right) x_{n}\right] -P_{C}p\right\Vert  \notag
\\
&\leq &\left\Vert \beta _{n}Sx_{n}+\left( 1-\beta _{n}\right)
z_{n}-p\right\Vert  \notag \\
&\leq &\left( 1-\beta _{n}\right) \left\Vert z_{n}-p\right\Vert +\beta
_{n}\left\Vert Sx_{n}-p\right\Vert  \notag \\
&\leq &\left( 1-\beta _{n}\right) \left\Vert x_{n}-p\right\Vert +\beta
_{n}\left\Vert Sx_{n}-Sp\right\Vert +\beta _{n}\left\Vert Sp-p\right\Vert 
\notag \\
&\leq &\left\Vert x_{n}-p\right\Vert +\beta _{n}\left\Vert Sp-p\right\Vert .
\label{2}
\end{eqnarray}%
Since $\lim_{n\rightarrow \infty }\frac{\beta _{n}}{\alpha _{n}}=0$, without
loss of generality, we can assume that $\beta _{n}\leq \alpha _{n}$, for all 
$n\geq 1.$ This gives us $\lim_{n\rightarrow \infty }\beta _{n}=0$. Let $%
t_{n}=\alpha _{n}\rho Vx_{n}+\left( I-\alpha _{n}\mu F\right) T^{n}y_{n}$.
Then, we have%
\begin{eqnarray}
\left\Vert x_{n+1}-p\right\Vert &=&\left\Vert P_{C}t_{n}-P_{C}p\right\Vert 
\notag \\
&\leq &\left\Vert t_{n}-p\right\Vert  \notag \\
&=&\left\Vert \alpha _{n}\rho Vx_{n}+\left( I-\alpha _{n}\mu F\right)
T^{n}y_{n}-p\right\Vert  \notag \\
&\leq &\alpha _{n}\left\Vert \rho Vx_{n}-\mu Fp\right\Vert +\left\Vert
\left( I-\alpha _{n}\mu F\right) T^{n}y_{n}-\left( I-\alpha _{n}\mu F\right)
T^{n}p\right\Vert  \notag \\
&\leq &\alpha _{n}\rho \gamma \left\Vert x_{n}-p\right\Vert +\alpha
_{n}\left\Vert \rho Vp-\mu Fp\right\Vert  \notag \\
&&+\left( 1-\alpha _{n}\nu \right) \left( \left\Vert y_{n}-p\right\Vert
+a_{n}\right) .  \label{3}
\end{eqnarray}%
So, it follows from (\ref{2}) and (\ref{3}) that%
\begin{eqnarray}
\left\Vert x_{n+1}-p\right\Vert &\leq &\alpha _{n}\rho \gamma \left\Vert
x_{n}-p\right\Vert +\alpha _{n}\left\Vert \rho Vp-\mu Fp\right\Vert  \notag
\\
&&+\left( 1-\alpha _{n}\nu \right) \left( \left\Vert x_{n}-p\right\Vert
+\beta _{n}\left\Vert Sp-p\right\Vert +a_{n}\right)  \notag \\
&\leq &\left( 1-\alpha _{n}\left( \nu -\rho \gamma \right) \right)
\left\Vert x_{n}-p\right\Vert +\alpha _{n}\left( \left\Vert \rho Vp-\mu
Fp\right\Vert +\left\Vert Sp-p\right\Vert +\frac{a_{n}}{\alpha _{n}}\right) 
\notag \\
&\leq &\left( 1-\alpha _{n}\left( \nu -\rho \gamma \right) \right)
\left\Vert x_{n}-p\right\Vert  \notag \\
&&+\alpha _{n}\left( \nu -\rho \gamma \right) \left[ \frac{1}{\left( \nu
-\rho \gamma \right) }\left( \left\Vert \rho Vp-\mu Fp\right\Vert
+\left\Vert Sp-p\right\Vert +\frac{a_{n}}{\alpha _{n}}\right) \right] .
\label{e3}
\end{eqnarray}%
From condition $(C2)$, there exists a constant $M_{1}>0$ such that%
\begin{equation*}
\left\Vert \rho Vp-\mu Fp\right\Vert +\left\Vert Sp-p\right\Vert +\frac{a_{n}%
}{\alpha _{n}}\leq M_{1}\text{, }\forall n\geq 1\text{.}
\end{equation*}%
Hence, from (\ref{e3}) we get%
\begin{equation*}
\left\Vert x_{n+1}-p\right\Vert \leq \left( 1-\alpha _{n}\left( \nu -\rho
\gamma \right) \right) \left\Vert x_{n}-p\right\Vert +\alpha _{n}\left( \nu
-\rho \gamma \right) \frac{M_{1}}{\left( \nu -\rho \gamma \right) }.
\end{equation*}%
By induction, we obtain%
\begin{equation*}
\left\Vert x_{n+1}-p\right\Vert \leq \max \left\{ \left\Vert
x_{1}-p\right\Vert ,\frac{M}{\left( \nu -\rho \gamma \right) }\right\} .
\end{equation*}%
Therefore, we obtain that $\left\{ x_{n}\right\} $ is bounded. So, the
sequences $\left\{ y_{n}\right\} $,$\left\{ z_{n}\right\} $ and $\left\{
u_{n}\right\} $ are bounded.
\end{proof}

\begin{lemma}
\label{L2} Assume that $(C1)$-$(C3)$ hold and $p\in \tciFourier $. Let $%
\left\{ x_{n}\right\} $ be the sequence generated by (\ref{4}). Then, the
followings are hold:

\begin{enumerate}
\item[(i)] $\lim_{n\rightarrow \infty }\left\Vert x_{n+1}-x_{n}\right\Vert
=0.$

\item[(ii)] The weak $w$-limit set $w_{w}\left( x_{n}\right) \subset
Fix\left( T\right) .$
\end{enumerate}
\end{lemma}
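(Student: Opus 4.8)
The plan is to establish part (i) first by a standard "difference-of-consecutive-iterates" estimate, then deduce part (ii) using the asymptotic-center machinery from Theorem \ref{C} and Lemma \ref{b}.

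For part (i), I would bound $\left\Vert x_{n+1}-x_{n}\right\Vert$ by propagating differences backward through the four lines of \eqref{4}. First, since $u_{n}=T_{r_{n}}(x_{n}-r_{n}Bx_{n})$ and $T_{r_{n}}$ is firmly nonexpansive (Lemma \ref{7}), while $I-r_nB$ is nonexpansive, I would estimate $\left\Vert u_{n}-u_{n-1}\right\Vert$ in terms of $\left\Vert x_{n}-x_{n-1}\right\Vert$ plus an error term coming from $|r_{n}-r_{n-1}|$ (because the resolvent parameter changes); the standard resolvent-identity argument gives a bound like $\left\Vert u_{n}-u_{n-1}\right\Vert \le \left\Vert x_{n}-x_{n-1}\right\Vert + |r_{n}-r_{n-1}|\,M$ for some constant $M$ controlling the bounded quantities. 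Next, from $z_{n}=P_{C}(u_{n}-\lambda_{n}Au_{n})$ and nonexpansiveness of $P_C$ and $I-\lambda_n A$, I would get $\left\Vert z_{n}-z_{n-1}\right\Vert \le \left\Vert u_{n}-u_{n-1}\right\Vert + |\lambda_{n}-\lambda_{n-1}|\,M$. Then for $y_n$ I use the convex combination with $\beta_n$, picking up an extra $|\beta_n-\beta_{n-1}|\,M$ term. Finally, writing $t_{n}=\alpha_{n}\rho Vx_{n}+(I-\alpha_{n}\mu F)T^{n}y_{n}$ and using Lemma \ref{cont} (so that $I-\alpha_n\mu F$ is a $(1-\alpha_n\nu)$-contraction on the relevant images) together with the nearly nonexpansive estimate $\left\Vert T^{n}y_{n}-T^{n}y_{n-1}\right\Vert \le \left\Vert y_{n}-y_{n-1}\right\Vert + a_{n}$, I would obtain an inequality of the form
\begin{equation*}
\left\Vert x_{n+1}-x_{n}\right\Vert \le \bigl(1-\alpha_{n}(\nu-\rho\gamma)\bigr)\left\Vert x_{n}-x_{n-1}\right\Vert + \alpha_{n}(\nu-\rho\gamma)\,\gamma_{n},
\end{equation*}
where $\gamma_{n}$ collects all the error terms divided by $\alpha_n$, namely multiples of $\tfrac{a_n}{\alpha_n}$, $\tfrac{|\alpha_n-\alpha_{n-1}|}{\alpha_n}$, $\tfrac{|\lambda_n-\lambda_{n-1}|}{\alpha_n}$, $\tfrac{|\beta_n-\beta_{n-1}|}{\alpha_n}$, $\tfrac{|r_n-r_{n-1}|}{\alpha_n}$, and $\tfrac{\left\Vert T^{n}y_{n-1}-T^{n-1}y_{n-1}\right\Vert}{\alpha_n}$. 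By condition $(C2)$ and $(C3)$ every one of these tends to $0$, so $\limsup\gamma_n\le 0$, and then Lemma \ref{Y} yields $\left\Vert x_{n+1}-x_{n}\right\Vert\to 0$.

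For part (ii), the goal is to show $\left\Vert x_{n}-T^{m}x_{n}\right\Vert$ iterated limits vanish, so that Lemma \ref{b} applies. I would first use $(C1)$, $\beta_n\to 0$, and the inequalities \eqref{A1}, \eqref{A2} to derive that the defect terms $\left\Vert Bx_{n}-Bp\right\Vert$ and $\left\Vert Au_{n}-Ap\right\Vert$ tend to $0$: concretely, substituting the sharper bounds (keeping the $-r_{n}(2\theta-r_{n})\left\Vert Bx_n-Bp\right\Vert^{2}$ and $-\lambda_{n}(2\alpha-\lambda_n)\left\Vert Au_n-Ap\right\Vert^{2}$ terms) into the estimate for $\left\Vert x_{n+1}-p\right\Vert^{2}$ and using part (i) forces these inner-product defects to $0$. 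From firm nonexpansiveness of $T_{r_n}$ I would similarly extract $\left\Vert u_{n}-x_{n}\right\Vert\to 0$, and then $\left\Vert z_{n}-u_{n}\right\Vert\to 0$, hence $\left\Vert z_{n}-x_{n}\right\Vert\to 0$ and $\left\Vert y_{n}-x_{n}\right\Vert\to 0$ (the last since $\beta_n\to0$). Combining these with $\alpha_n\to 0$ and the definition of $x_{n+1}$ shows $\left\Vert x_{n}-T^{n}y_{n}\right\Vert\to 0$, and therefore $\left\Vert x_{n}-T^{n}x_{n}\right\Vert\to 0$. To pass to a fixed point I take any $w\in w_{w}(x_n)$, so a subsequence $x_{n_k}\rightharpoonup w$; by Theorem \ref{C}, $w$ is the asymptotic center $A(C,\{x_{n_k}\})=\{w\}$. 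Using $(C3)$ to control $\left\Vert T^{m}x_{n}-T^{n}x_{n}\right\Vert$ and the nearly nonexpansive bound to relate $\left\Vert x_{n}-T^{m}x_{n}\right\Vert$ to $\left\Vert x_{n}-T^{n}x_{n}\right\Vert$, I would verify the hypothesis $\lim_{m}\lim_{n}\left\Vert x_{n}-T^{m}x_{n}\right\Vert=0$, and then Lemma \ref{b} gives $Tw=w$, i.e. $w\in Fix(T)$.

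The main obstacle is the bookkeeping in part (i): each of the four lines of \eqref{4} contributes its own parameter-drift error, and one must verify that dividing by $\alpha_n$ and invoking $(C2)$–$(C3)$ sends every single term to zero so that the coefficient sequence feeding Lemma \ref{Y} genuinely has nonpositive $\limsup$. In particular the term $\left\Vert T^{n}y_{n-1}-T^{n-1}y_{n-1}\right\Vert/\alpha_n$ is exactly the reason $(C3)$ is imposed, and care is needed because $y_{n-1}$ is not fixed — one should replace it by a supremum over the bounded orbit, or apply $(C3)$ at the point $y_{n-1}$ directly, which is legitimate since $(C3)$ is assumed for all $x\in C$.
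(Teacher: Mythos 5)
Your plan matches the paper's proof essentially step for step: part (i) propagates the consecutive-difference estimates through the four lines of (\ref{4}) (including the resolvent comparison giving $\Vert u_n-u_{n-1}\Vert\le\Vert x_n-x_{n-1}\Vert+\frac{1}{\mu}\vert r_n-r_{n-1}\vert M$), collects all parameter-drift terms divided by $\alpha_n$, and closes with Lemma \ref{Y}; part (ii) derives $\Vert Bx_n-Bp\Vert,\Vert Au_n-Ap\Vert\to 0$ from the refined squared estimates, then $\Vert u_n-x_n\Vert,\Vert u_n-z_n\Vert,\Vert x_n-y_n\Vert\to 0$, verifies $\lim_m\lim_n\Vert x_n-T^m x_n\Vert=0$, and invokes Theorem \ref{C} and Lemma \ref{b}. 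No substantive deviation from the paper's argument.
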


\begin{proof}
(i) Since the metric projection $P_{C}$ and the mapping $\left( I-\lambda
_{n}A\right) $ are nonexpansive, we have%
\begin{eqnarray}
\left\Vert z_{n}-z_{n-1}\right\Vert &=&\left\Vert P_{C}\left( u_{n}-\lambda
_{n}Au_{n}\right) -P_{C}\left( u_{n-1}-\lambda _{n-1}Au_{n-1}\right)
\right\Vert  \notag \\
&\leq &\left\Vert \left( u_{n}-\lambda _{n}Au_{n}\right) -\left(
u_{n-1}-\lambda _{n-1}Au_{n-1}\right) \right\Vert  \notag \\
&=&\left\Vert u_{n}-u_{n-1}-\lambda _{n}\left( Au_{n}-Au_{n-1}\right)
-\left( \lambda _{n}-\lambda _{n-1}\right) Au_{n-1}\right\Vert  \notag \\
&\leq &\left\Vert u_{n}-u_{n-1}-\lambda _{n}\left( Au_{n}-Au_{n-1}\right)
\right\Vert +\left\vert \lambda _{n}-\lambda _{n-1}\right\vert \left\Vert
Au_{n-1}\right\Vert  \notag \\
&\leq &\left\Vert u_{n}-u_{n-1}\right\Vert +\left\vert \lambda _{n}-\lambda
_{n-1}\right\vert \left\Vert Au_{n-1}\right\Vert ,  \label{A3}
\end{eqnarray}%
and so%
\begin{eqnarray}
\left\Vert y_{n}-y_{n-1}\right\Vert &=&\left\Vert P_{C}\left[ \beta
_{n}Sx_{n}+\left( 1-\beta _{n}\right) z_{n}\right] \right.  \notag \\
&&\left. -P_{C}\left[ \beta _{n-1}Sx_{n-1}-\left( 1-\beta _{n-1}\right)
z_{n-1}\right] \right\Vert  \notag \\
&\leq &\left\Vert \beta _{n}Sx_{n}+\left( 1-\beta _{n}\right) z_{n}-\beta
_{n-1}Sx_{n-1}+\left( 1-\beta _{n-1}\right) z_{n-1}\right\Vert  \notag \\
&\leq &\left\Vert \beta _{n}\left( Sx_{n}-Sx_{n-1}\right) +\left( \beta
_{n}-\beta _{n-1}\right) Sx_{n-1}\right.  \notag \\
&&\left. +\left( 1-\beta _{n}\right) \left( z_{n}-z_{n-1}\right) +\left(
\beta _{n-1}-\beta _{n}\right) z_{n-1}\right\Vert  \notag \\
&\leq &\beta _{n}\left\Vert x_{n}-x_{n-1}\right\Vert +\left( 1-\beta
_{n}\right) \left\Vert z_{n}-z_{n-1}\right\Vert  \notag \\
&&+\left\vert \beta _{n}-\beta _{n-1}\right\vert \left( \left\Vert
Sx_{n-1}\right\Vert +\left\Vert z_{n-1}\right\Vert \right)  \notag \\
&\leq &\beta _{n}\left\Vert x_{n}-x_{n-1}\right\Vert +\left( 1-\beta
_{n}\right) \left[ \left\Vert u_{n}-u_{n-1}\right\Vert \right.  \notag \\
&&\left. +\left\vert \lambda _{n}-\lambda _{n-1}\right\vert \left\Vert
Au_{n-1}\right\Vert \right]  \notag \\
&&+\left\vert \beta _{n}-\beta _{n-1}\right\vert \left( \left\Vert
Sx_{n-1}\right\Vert +\left\Vert z_{n-1}\right\Vert \right) .  \label{A5}
\end{eqnarray}%
On the other side, since $u_{n}=T_{r_{n}}\left( x_{n}-r_{n}Bx_{n}\right) $
and $u_{n-1}=T_{r_{n-1}}\left( x_{n-1}-r_{n-1}Bx_{n-1}\right) $, we get%
\begin{equation}
\begin{array}{l}
G\left( u_{n},y\right) +\varphi \left( y\right) -\varphi \left( u_{n}\right)
+\left\langle Bx_{n},y-u_{n}\right\rangle \\ 
+\frac{1}{r_{n}}\left\langle y-u_{n},u_{n}-x_{n}\right\rangle \geq 0\text{, }%
\forall y\in C%
\end{array}
\label{A6}
\end{equation}%
and%
\begin{equation}
\begin{array}{l}
G\left( u_{n-1},y\right) +\varphi \left( y\right) -\varphi \left(
u_{n-1}\right) +\left\langle Bx_{n-1},y-u_{n-1}\right\rangle \\ 
+\frac{1}{r_{n-1}}\left\langle y-u_{n-1},u_{n-1}-x_{n-1}\right\rangle \geq 0%
\text{, }\forall y\in C.%
\end{array}
\label{A7}
\end{equation}%
If we take $y=u_{n-1}$ in (\ref{A6}) and $y=u_{n}$ in (\ref{A7}), then we
have%
\begin{equation}
\begin{array}{l}
G\left( u_{n},u_{n-1}\right) +\varphi \left( u_{n-1}\right) -\varphi \left(
u_{n}\right) +\left\langle Bx_{n},u_{n-1}-u_{n}\right\rangle \\ 
+\frac{1}{r_{n}}\left\langle u_{n-1}-u_{n},u_{n}-x_{n}\right\rangle \geq 0%
\text{,}%
\end{array}
\label{A8}
\end{equation}%
and%
\begin{equation}
\begin{array}{l}
G\left( u_{n-1},u_{n}\right) +\varphi \left( u_{n}\right) -\varphi \left(
u_{n-1}\right) +\left\langle Bx_{n-1},u_{n}-u_{n-1}\right\rangle \\ 
+\frac{1}{r_{n-1}}\left\langle u_{n}-u_{n-1},u_{n-1}-x_{n-1}\right\rangle
\geq 0.%
\end{array}
\label{A9}
\end{equation}%
By using the monotonicity of the bifunction $G$ and the inequalitites (\ref%
{A8}) and (\ref{A9}), we obtain%
\begin{equation*}
\left\langle Bx_{n-1}-Bx_{n},u_{n}-u_{n-1}\right\rangle +\left\langle
u_{n}-u_{n-1},\frac{u_{n-1}-x_{n-1}}{r_{n-1}}-\frac{u_{n}-x_{n}}{r_{n}}%
\right\rangle \geq 0.
\end{equation*}

It follows from the last inequality that%
\begin{eqnarray}
0 &\leq &\left\langle u_{n}-u_{n-1},r_{n}\left( Bx_{n-1}-Bx_{n}\right) +%
\frac{r_{n}}{r_{n-1}}\left( u_{n-1}-x_{n-1}\right) -\left(
u_{n}-x_{n}\right) \right\rangle  \notag \\
&=&\left\langle u_{n-1}-u_{n},u_{n}-u_{n-1}+\left( 1-\frac{r_{n}}{r_{n-1}}%
\right) u_{n-1}\right.  \notag \\
&&\left. +\left( x_{n-1}-r_{n}Bx_{n-1}\right) -\left(
x_{n}-r_{n}Bx_{n}\right) -x_{n-1}+\frac{r_{n}}{r_{n-1}}x_{n-1}\right\rangle 
\notag \\
&=&\left\langle u_{n-1}-u_{n},\left( 1-\frac{r_{n}}{r_{n-1}}\right)
u_{n-1}+\left( x_{n-1}-r_{n}Bx_{n-1}\right) \right.  \notag \\
&&\left. -\left( x_{n}-r_{n}Bx_{n}\right) -x_{n-1}+\frac{r_{n}}{r_{n-1}}%
x_{n-1}\right\rangle -\left\Vert u_{n}-u_{n-1}\right\Vert ^{2}  \notag \\
&=&\left\langle u_{n-1}-u_{n},\left( 1-\frac{r_{n}}{r_{n-1}}\right) \left(
u_{n-1}-x_{n-1}\right) \right.  \notag \\
&&\left. +\left( x_{n-1}-r_{n}Bx_{n-1}\right) -\left(
x_{n}-r_{n}Bx_{n}\right) \right\rangle -\left\Vert u_{n}-u_{n-1}\right\Vert
^{2}  \notag \\
&\leq &\left\Vert u_{n-1}-u_{n}\right\Vert \left\{ \left\vert 1-\frac{r_{n}}{%
r_{n-1}}\right\vert \left\Vert u_{n-1}-x_{n-1}\right\Vert \right.  \notag \\
&&\left. +\left\Vert \left( x_{n-1}-r_{n}Bx_{n-1}\right) -\left(
x_{n}-r_{n}Bx_{n}\right) \right\Vert \right\} -\left\Vert
u_{n}-u_{n-1}\right\Vert ^{2}  \notag \\
&\leq &\left\Vert u_{n-1}-u_{n}\right\Vert \left\{ \left\vert 1-\frac{r_{n}}{%
r_{n-1}}\right\vert \left\Vert u_{n-1}-x_{n-1}\right\Vert \right.  \notag \\
&&\left. +\left\Vert x_{n-1}-x_{n}\right\Vert \right\} -\left\Vert
u_{n}-u_{n-1}\right\Vert ^{2}.  \label{A10}
\end{eqnarray}%
From (\ref{A10}), we have%
\begin{equation*}
\left\Vert u_{n-1}-u_{n}\right\Vert \leq \left\vert 1-\frac{r_{n}}{r_{n-1}}%
\right\vert \left\Vert u_{n-1}-x_{n-1}\right\Vert +\left\Vert
x_{n-1}-x_{n}\right\Vert .
\end{equation*}%
Without loss of generality, we can assume that there exists a real number $%
\mu $ such that $r_{n}>\mu >0$ for all positive integers $n$. Then, we have%
\begin{equation}
\left\Vert u_{n-1}-u_{n}\right\Vert \leq \left\Vert x_{n-1}-x_{n}\right\Vert
+\frac{1}{\mu }\left\vert r_{n-1}-r_{n}\right\vert \left\Vert
u_{n-1}-x_{n-1}\right\Vert .  \label{A11}
\end{equation}%
(\ref{A5}) and (\ref{A11}) imply that%
\begin{eqnarray*}
\left\Vert y_{n}-y_{n-1}\right\Vert &\leq &\beta _{n}\left\Vert
x_{n}-x_{n-1}\right\Vert \\
&&+\left( 1-\beta _{n}\right) \left[ \left\Vert x_{n-1}-x_{n}\right\Vert +%
\frac{1}{\mu }\left\vert r_{n-1}-r_{n}\right\vert \left\Vert
u_{n-1}-x_{n-1}\right\Vert \right. \\
&&\left. +\left\vert \lambda _{n}-\lambda _{n-1}\right\vert \left\Vert
Au_{n-1}\right\Vert \right] +\left\vert \beta _{n}-\beta _{n-1}\right\vert
\left( \left\Vert Sx_{n-1}\right\Vert +\left\Vert z_{n-1}\right\Vert \right)
\\
&=&\left\Vert x_{n}-x_{n-1}\right\Vert +\left( 1-\beta _{n}\right) \left[ 
\frac{1}{\mu }\left\vert r_{n-1}-r_{n}\right\vert \left\Vert
u_{n-1}-x_{n-1}\right\Vert \right. \\
&&\left. +\left\vert \lambda _{n}-\lambda _{n-1}\right\vert \left\Vert
Au_{n-1}\right\Vert \right] +\left\vert \beta _{n}-\beta _{n-1}\right\vert
\left( \left\Vert Sx_{n-1}\right\Vert +\left\Vert z_{n-1}\right\Vert \right)
.
\end{eqnarray*}%
Hence, we get%
\begin{eqnarray}
\left\Vert x_{n+1}-x_{n}\right\Vert &=&\left\Vert
P_{C}t_{n}-P_{C}t_{n-1}\right\Vert  \notag \\
&\leq &\left\Vert t_{n}-t_{n-1}\right\Vert  \notag \\
&=&\left\Vert \alpha _{n}\rho Vx_{n}+\left( I-\alpha _{n}\mu F\right)
T^{n}y_{n}\right.  \notag \\
&&\left. -\alpha _{n-1}\rho Vx_{n-1}+\left( I-\alpha _{n-1}\mu F\right)
T^{n-1}y_{n-1}\right\Vert  \notag \\
&\leq &\left\Vert \alpha _{n}\rho V\left( x_{n}-x_{n-1}\right) +\left(
\alpha _{n}-\alpha _{n-1}\right) \rho Vx_{n-1}\right.  \notag \\
&&+\left( I-\alpha _{n}\mu F\right) T^{n}y_{n}-\left( I-\alpha _{n}\mu
F\right) T^{n}y_{n-1}  \notag \\
&&+T^{n}y_{n-1}-T^{n-1}y_{n-1}  \notag \\
&&\left. +\alpha _{n-1}\mu FT^{n-1}y_{n-1}-\alpha _{n}\mu
FT^{n}y_{n-1}\right\Vert  \notag \\
&\leq &\alpha _{n}\rho \gamma \left\Vert x_{n}-x_{n-1}\right\Vert +\gamma
\left\vert \alpha _{n}-\alpha _{n-1}\right\vert \left\Vert
Vx_{n-1}\right\Vert  \notag \\
&&+\left( 1-\alpha _{n}\nu \right) \left\Vert
T^{n}y_{n}-T^{n}y_{n-1}\right\Vert +\left\Vert
T^{n}y_{n-1}-T^{n-1}y_{n-1}\right\Vert  \notag \\
&&+\mu \left\Vert \alpha _{n-1}FT^{n-1}y_{n-1}-\alpha
_{n}FT^{n}y_{n-1}\right\Vert  \notag \\
&\leq &\alpha _{n}\rho \gamma \left\Vert x_{n}-x_{n-1}\right\Vert +\gamma
\left\vert \alpha _{n}-\alpha _{n-1}\right\vert \left\Vert
Vx_{n-1}\right\Vert  \notag \\
&&+\left( 1-\alpha _{n}\nu \right) \left[ \left\Vert
y_{n}-y_{n-1}\right\Vert +a_{n}\right] +\left\Vert
T^{n}y_{n-1}-T^{n-1}y_{n-1}\right\Vert  \notag \\
&&+\mu \left\Vert \alpha _{n-1}\left( FT^{n-1}y_{n-1}-FT^{n}y_{n-1}\right)
-\left( \alpha _{n}-\alpha _{n-1}\right) FT^{n}y_{n-1}\right\Vert  \notag \\
&\leq &\alpha _{n}\rho \gamma \left\Vert x_{n}-x_{n-1}\right\Vert +\gamma
\left\vert \alpha _{n}-\alpha _{n-1}\right\vert \left\Vert
Vx_{n-1}\right\Vert  \notag \\
&&+\left( 1-\alpha _{n}\nu \right) \left\{ \left\Vert
x_{n}-x_{n-1}\right\Vert \right.  \notag \\
&&+\left( 1-\beta _{n}\right) \left[ \frac{1}{\mu }\left\vert
r_{n-1}-r_{n}\right\vert \left\Vert u_{n-1}-x_{n-1}\right\Vert +\left\vert
\lambda _{n}-\lambda _{n-1}\right\vert \left\Vert Au_{n-1}\right\Vert \right]
\notag \\
&&\left. +\left\vert \beta _{n}-\beta _{n-1}\right\vert \left( \left\Vert
Sx_{n-1}\right\Vert +\left\Vert z_{n-1}\right\Vert \right) \right\}  \notag
\\
&&+\left( 1-\alpha _{n}\nu \right) a_{n}+\mathfrak{D}_{B}\left(
T_{n},T^{n-1}\right)  \notag \\
&&+\mu \alpha _{n-1}L\left\Vert T^{n}y_{n-1}-T^{n-1}y_{n-1}\right\Vert
+\left\vert \alpha _{n}-\alpha _{n-1}\right\vert \left\Vert
FT^{n}y_{n-1}\right\Vert  \notag \\
&\leq &\left( 1-\alpha _{n}\left( \nu -\rho \gamma \right) \right)
\left\Vert x_{n}-x_{n-1}\right\Vert  \notag \\
&&+\left\vert \alpha _{n}-\alpha _{n-1}\right\vert \left( \gamma \left\Vert
Vx_{n-1}\right\Vert +\left\Vert FT^{n}y_{n-1}\right\Vert \right)  \notag \\
&&+\left( 1+\mu \alpha _{n-1}L\right) \left\Vert
T^{n}y_{n-1}-T^{n-1}y_{n-1}\right\Vert +a_{n}  \notag \\
&&+\frac{1}{\mu }\left\vert r_{n-1}-r_{n}\right\vert \left\Vert
u_{n-1}-x_{n-1}\right\Vert +\left\vert \lambda _{n}-\lambda
_{n-1}\right\vert \left\Vert Au_{n-1}\right\Vert  \notag \\
&&+\left\vert \beta _{n}-\beta _{n-1}\right\vert \left( \left\Vert
Sx_{n-1}\right\Vert +\left\Vert z_{n-1}\right\Vert \right)  \notag \\
&\leq &\left( 1-\alpha _{n}\left( \nu -\rho \gamma \right) \right)
\left\Vert x_{n}-x_{n-1}\right\Vert +\left( 1+\mu \alpha _{n-1}L\right)
\left\Vert T^{n}y_{n-1}-T^{n-1}y_{n-1}\right\Vert  \notag \\
&&+M_{2}\left( \left\vert \alpha _{n}-\alpha _{n-1}\right\vert +\frac{1}{\mu 
}\left\vert r_{n-1}-r_{n}\right\vert \right.  \notag \\
&&\left. +\left\vert \lambda _{n}-\lambda _{n-1}\right\vert +\left\vert
\beta _{n}-\beta _{n-1}\right\vert \right) +a_{n}  \label{A12}
\end{eqnarray}%
where%
\begin{eqnarray*}
M_{2} &=&\max \left\{ \sup_{n\geq 1}\left( \gamma \left\Vert
Vx_{n-1}\right\Vert +\left\Vert FT^{n}y_{n-1}\right\Vert \right) \text{, }%
\sup_{n\geq 1}\left\Vert u_{n-1}-x_{n-1}\right\Vert \text{,}\right. \\
&&\left. \sup_{n\geq 1}\left\Vert Au_{n-1}\right\Vert \text{, }\sup_{n\geq
1}\left( \left\Vert Sx_{n-1}\right\Vert +\left\Vert z_{n-1}\right\Vert
\right) \right\} .
\end{eqnarray*}%
Therefore, we obtain%
\begin{equation}
\left\Vert x_{n+1}-x_{n}\right\Vert \leq \left( 1-\alpha _{n}\left( \nu
-\rho \gamma \right) \right) \left\Vert x_{n}-x_{n-1}\right\Vert +\alpha
_{n}\left( \nu -\rho \gamma \right) \delta _{n},  \label{A13}
\end{equation}%
where%
\begin{eqnarray*}
\delta _{n} &=&\frac{1}{\left( \nu -\rho \gamma \right) }\left[ \left( 1+\mu
\alpha _{n-1}L\right) \frac{\left\Vert
T^{n}y_{n-1}-T^{n-1}y_{n-1}\right\Vert }{\alpha _{n}}\right. \\
&&+\frac{a_{n}}{\alpha _{n}}+M_{2}\left( \frac{\left\vert \alpha _{n}-\alpha
_{n-1}\right\vert }{\alpha _{n}}+\frac{1}{\mu }\frac{\left\vert
r_{n-1}-r_{n}\right\vert }{\alpha _{n}}\right. \\
&&+\left. \left. \frac{\left\vert \lambda _{n}-\lambda _{n-1}\right\vert }{%
\alpha _{n}}+\frac{\left\vert \beta _{n}-\beta _{n-1}\right\vert }{\alpha
_{n}}\right) \right] .
\end{eqnarray*}%
By using conditions $(C2)$ and $(C3)$, we get%
\begin{equation}
\limsup_{n\rightarrow \infty }\delta _{n}\leq 0.  \label{A14}
\end{equation}%
So, it follows from (\ref{A13}), (\ref{A14}) and Lemma \ref{Y} that%
\begin{equation}
\lim_{n\rightarrow \infty }\left\Vert x_{n+1}-x_{n}\right\Vert =0.
\label{A16}
\end{equation}%
(ii) Now, we show that the weak $w$-limit set of $\left\{ x_{n}\right\} $ is
a subset of the set of fixed points of $T.$ To show that, we need to show $%
\lim_{n\rightarrow \infty }\left\Vert u_{n}-x_{n}\right\Vert =0$. Let $p\in
\tciFourier $. Then, by using (\ref{A1}) and (\ref{A2}), we get%
\begin{eqnarray}
\left\Vert x_{n+1}-p\right\Vert ^{2} &\leq &\left\Vert t_{n}-p\right\Vert
^{2}  \notag \\
&=&\left\Vert \alpha _{n}\rho Vx_{n}+\left( I-\alpha _{n}\mu F\right)
T^{n}y_{n}-p\right\Vert ^{2}  \notag \\
&=&\left\Vert \alpha _{n}\rho Vx_{n}-\alpha _{n}\mu Fp+\left( I-\alpha
_{n}\mu F\right) T^{n}y_{n}-\left( I-\alpha _{n}\mu F\right)
T^{n}p\right\Vert ^{2}  \notag \\
&\leq &\alpha _{n}\left\Vert \rho Vx_{n}-\mu Fp\right\Vert ^{2}+\left(
1-\alpha _{n}\nu \right) \left( \left\Vert y_{n}-p\right\Vert +a_{n}\right)
^{2}  \notag \\
&=&\alpha _{n}\left\Vert \rho Vx_{n}-\mu Fp\right\Vert ^{2}  \notag \\
&&+\left( 1-\alpha _{n}\nu \right) \left( \left\Vert y_{n}-p\right\Vert
^{2}+2a_{n}\left\Vert y_{n}-p\right\Vert +a_{n}^{2}\right)  \notag \\
&=&\alpha _{n}\left\Vert \rho Vx_{n}-\mu Fp\right\Vert ^{2}+\left( 1-\alpha
_{n}\nu \right) \left\Vert y_{n}-p\right\Vert ^{2}  \notag \\
&&+2\left( 1-\alpha _{n}\nu \right) a_{n}\left\Vert y_{n}-p\right\Vert
+\left( 1-\alpha _{n}\nu \right) a_{n}^{2}  \notag \\
&\leq &\alpha _{n}\left\Vert \rho Vx_{n}-\mu Fp\right\Vert ^{2}+\left(
1-\alpha _{n}\nu \right) \left[ \beta _{n}\left\Vert Sx_{n}-p\right\Vert
^{2}\right.  \notag \\
&&+\left. \left( 1-\beta _{n}\right) \left\Vert z_{n}-p\right\Vert ^{2} 
\right] +2\left( 1-\alpha _{n}\nu \right) a_{n}\left\Vert y_{n}-p\right\Vert
+\left( 1-\alpha _{n}\nu \right) a_{n}^{2}  \notag \\
&=&\alpha _{n}\left\Vert \rho Vx_{n}-\mu Fp\right\Vert ^{2}+\left( 1-\alpha
_{n}\nu \right) \beta _{n}\left\Vert Sx_{n}-p\right\Vert ^{2}  \notag \\
&&+\left( 1-\alpha _{n}\nu \right) \left( 1-\beta _{n}\right) \left\Vert
z_{n}-p\right\Vert ^{2}  \notag \\
&&+2\left( 1-\alpha _{n}\nu \right) a_{n}\left\Vert y_{n}-p\right\Vert
+\left( 1-\alpha _{n}\nu \right) a_{n}^{2}  \notag \\
&\leq &\alpha _{n}\left\Vert \rho Vx_{n}-\mu Fp\right\Vert ^{2}+\left(
1-\alpha _{n}\nu \right) \beta _{n}\left\Vert Sx_{n}-p\right\Vert ^{2} 
\notag \\
&&+\left( 1-\alpha _{n}\nu \right) \left( 1-\beta _{n}\right) \left[
\left\Vert x_{n}-p\right\Vert ^{2}-r_{n}\left( 2\theta -r_{n}\right)
\left\Vert Bx_{n}-Bp\right\Vert ^{2}\right.  \notag \\
&&\left. -\lambda _{n}\left( 2\alpha -\lambda _{n}\right) \left\Vert
Au_{n}-Ap\right\Vert ^{2}\right]  \notag \\
&&+2\left( 1-\alpha _{n}\nu \right) a_{n}\left\Vert y_{n}-p\right\Vert
+\left( 1-\alpha _{n}\nu \right) a_{n}^{2}  \notag \\
&\leq &\alpha _{n}\left\Vert \rho Vx_{n}-\mu Fp\right\Vert ^{2}+\beta
_{n}\left\Vert Sx_{n}-p\right\Vert ^{2}+\left\Vert x_{n}-p\right\Vert ^{2} 
\notag \\
&&-\left( 1-\alpha _{n}\nu \right) \left( 1-\beta _{n}\right) \left[
r_{n}\left( 2\theta -r_{n}\right) \left\Vert Bx_{n}-Bp\right\Vert ^{2}\right.
\notag \\
&&+\left. \lambda _{n}\left( 2\alpha -\lambda _{n}\right) \left\Vert
Au_{n}-Ap\right\Vert ^{2}\right]  \notag \\
&&+2\left( 1-\alpha _{n}\nu \right) a_{n}\left\Vert y_{n}-p\right\Vert
+\left( 1-\alpha _{n}\nu \right) a_{n}^{2}  \label{A15}
\end{eqnarray}%
The inequality (\ref{A15}) implies that%
\begin{eqnarray*}
&&\left( 1-\alpha _{n}\nu \right) \left( 1-\beta _{n}\right) \left\{
r_{n}\left( 2\theta -r_{n}\right) \left\Vert Bx_{n}-Bp\right\Vert
^{2}+\lambda _{n}\left( 2\alpha -\lambda _{n}\right) \left\Vert
Au_{n}-Ap\right\Vert ^{2}\right\} \\
&\leq &\alpha _{n}\left\Vert \rho Vx_{n}-\mu Fp\right\Vert ^{2}+\beta
_{n}\left\Vert Sx_{n}-p\right\Vert ^{2}+\left\Vert x_{n}-p\right\Vert
^{2}-\left\Vert x_{n+1}-p\right\Vert ^{2} \\
&&+2\left( 1-\alpha _{n}\nu \right) a_{n}\left\Vert y_{n}-p\right\Vert
+\left( 1-\alpha _{n}\nu \right) a_{n}^{2} \\
&\leq &\alpha _{n}\left\Vert \rho Vx_{n}-\mu Fp\right\Vert ^{2}+\beta
_{n}\left\Vert Sx_{n}-p\right\Vert ^{2}+\left( \left\Vert x_{n}-p\right\Vert
+\left\Vert x_{n+1}-p\right\Vert \right) \left\Vert x_{n+1}-p\right\Vert \\
&&+2\left( 1-\alpha _{n}\nu \right) a_{n}\left\Vert y_{n}-p\right\Vert
+\left( 1-\alpha _{n}\nu \right) a_{n}^{2}.
\end{eqnarray*}%
So, it follows from (\ref{A16}), conditions $(C1)$ and $(C2)$ that $%
\lim_{n\rightarrow \infty }\left\Vert Bx_{n}-Bp\right\Vert =0$ and $%
\lim_{n\rightarrow \infty }\left\Vert Au_{n}-Ap\right\Vert =0$. On the other
side, we know from Lemma \ref{7} that $T_{r_{n}}$ is firmly nonexpansive
mapping, we get%
\begin{eqnarray*}
\left\Vert u_{n}-p\right\Vert ^{2} &=&\left\Vert T_{r_{n}}\left(
x_{n}-r_{n}Bx_{n}\right) -T_{r_{n}}\left( p-r_{n}Bp\right) \right\Vert ^{2}
\\
&\leq &\left\langle u_{n}-p,\left( x_{n}-r_{n}Bx_{n}\right) -\left(
p-r_{n}Bp\right) \right\rangle \\
&=&\frac{1}{2}\left\{ \left\Vert u_{n}-p\right\Vert ^{2}+\left\Vert \left(
x_{n}-r_{n}Bx_{n}\right) -\left( p-r_{n}Bp\right) \right\Vert ^{2}\right. \\
&&-\left. \left\Vert u_{n}-p-\left[ \left( x_{n}-r_{n}Bx_{n}\right) -\left(
p-r_{n}Bp\right) \right] \right\Vert ^{2}\right\} .
\end{eqnarray*}%
which implies that 
\begin{eqnarray}
\left\Vert u_{n}-p\right\Vert ^{2} &\leq &\left\Vert \left(
x_{n}-r_{n}Bx_{n}\right) -\left( p-r_{n}Bp\right) \right\Vert ^{2}  \notag \\
&&-\left\Vert u_{n}-x_{n}+r_{n}\left( Bx_{n}-Bp\right) \right\Vert ^{2} 
\notag \\
&\leq &\left\Vert x_{n}-p\right\Vert ^{2}-\left\Vert u_{n}-x_{n}+r_{n}\left(
Bx_{n}-Bp\right) \right\Vert ^{2}  \notag \\
&\leq &\left\Vert x_{n}-p\right\Vert ^{2}-\left\Vert u_{n}-x_{n}\right\Vert
^{2}  \notag \\
&&+2r_{n}\left\Vert u_{n}-x_{n}\right\Vert \left\Vert Bx_{n}-Bp\right\Vert .
\label{A17}
\end{eqnarray}%
Then, from (\ref{A2}), (\ref{A15}) and (\ref{A17}), we have%
\begin{eqnarray*}
\left\Vert x_{n+1}-p\right\Vert ^{2} &\leq &\alpha _{n}\left\Vert \rho
Vx_{n}-\mu Fp\right\Vert ^{2}+\left( 1-\alpha _{n}\nu \right) \left[ \beta
_{n}\left\Vert Sx_{n}-p\right\Vert ^{2}\right. \\
&&+\left. \left( 1-\beta _{n}\right) \left\Vert z_{n}-p\right\Vert ^{2} 
\right] +2\left( 1-\alpha _{n}\nu \right) a_{n}\left\Vert y_{n}-p\right\Vert
+\left( 1-\alpha _{n}\nu \right) a_{n}^{2} \\
&\leq &\alpha _{n}\left\Vert \rho Vx_{n}-\mu Fp\right\Vert ^{2}+\left(
1-\alpha _{n}\nu \right) \left[ \beta _{n}\left\Vert Sx_{n}-p\right\Vert
^{2}\right. \\
&&+\left. \left( 1-\beta _{n}\right) \left\Vert u_{n}-p\right\Vert ^{2} 
\right] +2\left( 1-\alpha _{n}\nu \right) a_{n}\left\Vert y_{n}-p\right\Vert
+\left( 1-\alpha _{n}\nu \right) a_{n}^{2} \\
&\leq &\alpha _{n}\left\Vert \rho Vx_{n}-\mu Fp\right\Vert ^{2}+\left(
1-\alpha _{n}\nu \right) \left[ \beta _{n}\left\Vert Sx_{n}-p\right\Vert
^{2}\right. \\
&&+\left( 1-\beta _{n}\right) \left( \left\Vert x_{n}-p\right\Vert
^{2}-\left\Vert u_{n}-x_{n}\right\Vert ^{2}\right. \\
&&+\left. \left. 2r_{n}\left\Vert u_{n}-x_{n}\right\Vert \left\Vert
Bx_{n}-Bp\right\Vert \right) \right] \\
&&+2\left( 1-\alpha _{n}\nu \right) a_{n}\left\Vert y_{n}-p\right\Vert
+\left( 1-\alpha _{n}\nu \right) a_{n}^{2} \\
&\leq &\alpha _{n}\left\Vert \rho Vx_{n}-\mu Fp\right\Vert ^{2}+\beta
_{n}\left\Vert Sx_{n}-p\right\Vert ^{2}+\left\Vert x_{n}-p\right\Vert ^{2} \\
&&-\left( 1-\alpha _{n}\nu \right) \left( 1-\beta _{n}\right) \left\Vert
u_{n}-x_{n}\right\Vert ^{2}+2r_{n}\left\Vert u_{n}-x_{n}\right\Vert
\left\Vert Bx_{n}-Bp\right\Vert \\
&&+2\left( 1-\alpha _{n}\nu \right) a_{n}\left\Vert y_{n}-p\right\Vert
+\left( 1-\alpha _{n}\nu \right) a_{n}^{2}.
\end{eqnarray*}%
From the last inequality, we obtain%
\begin{eqnarray*}
&&\left( 1-\alpha _{n}\nu \right) \left( 1-\beta _{n}\right) \left\Vert
u_{n}-x_{n}\right\Vert ^{2} \\
&\leq &\alpha _{n}\left\Vert \rho Vx_{n}-\mu Fp\right\Vert ^{2}+\beta
_{n}\left\Vert Sx_{n}-p\right\Vert ^{2} \\
&&+\left\Vert x_{n}-p\right\Vert ^{2}-\left\Vert x_{n+1}-p\right\Vert
^{2}+2r_{n}\left\Vert u_{n}-x_{n}\right\Vert \left\Vert Bx_{n}-Bp\right\Vert
\\
&&+2\left( 1-\alpha _{n}\nu \right) a_{n}\left\Vert y_{n}-p\right\Vert
+\left( 1-\alpha _{n}\nu \right) a_{n}^{2} \\
&\leq &\alpha _{n}\left\Vert \rho Vx_{n}-\mu Fp\right\Vert ^{2}+\beta
_{n}\left\Vert Sx_{n}-p\right\Vert ^{2} \\
&&+\left( \left\Vert x_{n}-p\right\Vert +\left\Vert x_{n+1}-p\right\Vert
\right) \left\Vert x_{n+1}-x_{n}\right\Vert \\
&&+2r_{n}\left\Vert u_{n}-x_{n}\right\Vert \left\Vert Bx_{n}-Bp\right\Vert \\
&&+2\left( 1-\alpha _{n}\nu \right) a_{n}\left\Vert y_{n}-p\right\Vert
+\left( 1-\alpha _{n}\nu \right) a_{n}^{2}.
\end{eqnarray*}%
Since $\lim_{n\rightarrow \infty }\left\Vert Bx_{n}-Bp\right\Vert =0$ and $%
\left\{ \left\Vert y_{n}-p\right\Vert \right\} $ is a bounded sequence, it
follows from (\ref{A16}) and conditions $(C1)$, $(C2)$ that%
\begin{equation}
\lim_{n\rightarrow \infty }\left\Vert u_{n}-x_{n}\right\Vert =0.  \label{A18}
\end{equation}%
On the other hand, from the property (2) of the metric projection, we write%
\begin{eqnarray*}
\left\Vert z_{n}-p\right\Vert ^{2} &=&\left\Vert P_{C}\left( u_{n}-\lambda
_{n}Au_{n}\right) -P_{C}\left( p-\lambda _{n}Ap\right) \right\Vert ^{2} \\
&\leq &\left\langle z_{n}-p,\left( u_{n}-\lambda _{n}Au_{n}\right) -\left(
p-\lambda _{n}Ap\right) \right\rangle \\
&=&\frac{1}{2}\left\{ \left\Vert z_{n}-p\right\Vert ^{2}+\left\Vert
u_{n}-p\left( Au_{n}-Ap\right) \right\Vert ^{2}\right. \\
&&-\left. \left\Vert u_{n}-p-\lambda _{n}\left( Au_{n}-Ap\right) -\left(
z_{n}-p\right) \right\Vert ^{2}\right\} \\
&\leq &\frac{1}{2}\left\{ \left\Vert z_{n}-p\right\Vert ^{2}+\left\Vert
u_{n}-p\right\Vert ^{2}\right. \\
&&-\left. \left\Vert u_{n}-z_{n}-\lambda _{n}\left( Au_{n}-Ap\right)
\right\Vert ^{2}\right\} \\
&\leq &\frac{1}{2}\left\{ \left\Vert z_{n}-p\right\Vert ^{2}+\left\Vert
u_{n}-p\right\Vert ^{2}\right. \\
&&-\left. \left\Vert u_{n}-z_{n}\right\Vert ^{2}+2\lambda _{n}\left\langle
u_{n}-z_{n},Au_{n}-Ap\right\rangle \right\} \\
&\leq &\frac{1}{2}\left\{ \left\Vert z_{n}-p\right\Vert ^{2}+\left\Vert
u_{n}-p\right\Vert ^{2}-\left\Vert u_{n}-z_{n}\right\Vert ^{2}\right. \\
&&+\left. 2\lambda _{n}\left\Vert u_{n}-z_{n}\right\Vert \left\Vert
Au_{n}-Ap\right\Vert \right\} .
\end{eqnarray*}%
Hence, we obtain%
\begin{eqnarray}
\left\Vert z_{n}-p\right\Vert ^{2} &\leq &\left\Vert u_{n}-p\right\Vert
^{2}-\left\Vert u_{n}-z_{n}\right\Vert ^{2}  \notag \\
&&+2\lambda _{n}\left\Vert u_{n}-z_{n}\right\Vert \left\Vert
Au_{n}-Ap\right\Vert  \notag \\
&\leq &\left\Vert x_{n}-p\right\Vert ^{2}-\left\Vert u_{n}-z_{n}\right\Vert
^{2}  \notag \\
&&+2\lambda _{n}\left\Vert u_{n}-z_{n}\right\Vert \left\Vert
Au_{n}-Ap\right\Vert .  \label{A19}
\end{eqnarray}%
From (\ref{A15}) and (\ref{A19}), we get%
\begin{eqnarray*}
\left\Vert x_{n+1}-p\right\Vert ^{2} &\leq &\alpha _{n}\left\Vert \rho
Vx_{n}-\mu Fp\right\Vert ^{2}+\left( 1-\alpha _{n}\nu \right) \left[ \beta
_{n}\left\Vert Sx_{n}-p\right\Vert ^{2}\right. \\
&&+\left. \left( 1-\beta _{n}\right) \left\Vert z_{n}-p\right\Vert ^{2} 
\right] +2\left( 1-\alpha _{n}\nu \right) a_{n}\left\Vert y_{n}-p\right\Vert
+\left( 1-\alpha _{n}\nu \right) a_{n}^{2} \\
&\leq &\alpha _{n}\left\Vert \rho Vx_{n}-\mu Fp\right\Vert ^{2}+\left(
1-\alpha _{n}\nu \right) \left[ \beta _{n}\left\Vert Sx_{n}-p\right\Vert
^{2}\right. \\
&&+\left( 1-\beta _{n}\right) \left( \left\Vert x_{n}-p\right\Vert
^{2}-\left\Vert u_{n}-z_{n}\right\Vert ^{2}\right. \\
&&+\left. \left. 2\lambda _{n}\left\Vert u_{n}-z_{n}\right\Vert \left\Vert
Au_{n}-Ap\right\Vert \right) \right] \\
&&+2\left( 1-\alpha _{n}\nu \right) a_{n}\left\Vert y_{n}-p\right\Vert
+\left( 1-\alpha _{n}\nu \right) a_{n}^{2} \\
&\leq &\alpha _{n}\left\Vert \rho Vx_{n}-\mu Fp\right\Vert ^{2}+\beta
_{n}\left\Vert Sx_{n}-p\right\Vert ^{2}+\left\Vert x_{n}-p\right\Vert ^{2} \\
&&-\left( 1-\alpha _{n}\nu \right) \beta _{n}\left\Vert
u_{n}-z_{n}\right\Vert ^{2}+2\lambda _{n}\left\Vert u_{n}-z_{n}\right\Vert
\left\Vert Au_{n}-Ap\right\Vert \\
&&+2\left( 1-\alpha _{n}\nu \right) a_{n}\left\Vert y_{n}-p\right\Vert
+\left( 1-\alpha _{n}\nu \right) a_{n}^{2}.
\end{eqnarray*}%
So, we have%
\begin{eqnarray*}
\left( 1-\alpha _{n}\nu \right) \beta _{n}\left\Vert u_{n}-z_{n}\right\Vert
^{2} &\leq &\alpha _{n}\left\Vert \rho Vx_{n}-\mu Fp\right\Vert ^{2}+\beta
_{n}\left\Vert Sx_{n}-p\right\Vert ^{2} \\
&&+\left\Vert x_{n}-p\right\Vert ^{2}-\left\Vert x_{n+1}-p\right\Vert ^{2} \\
&&+2\lambda _{n}\left\Vert u_{n}-z_{n}\right\Vert \left\Vert
Au_{n}-Ap\right\Vert \\
&&+2\left( 1-\alpha _{n}\nu \right) a_{n}\left\Vert y_{n}-p\right\Vert
+\left( 1-\alpha _{n}\nu \right) a_{n}^{2} \\
&\leq &\alpha _{n}\left\Vert \rho Vx_{n}-\mu Fp\right\Vert ^{2}+\beta
_{n}\left\Vert Sx_{n}-p\right\Vert ^{2} \\
&&+\left( \left\Vert x_{n}-p\right\Vert +\left\Vert x_{n+1}-p\right\Vert
\right) \left\Vert x_{n+1}-x_{n}\right\Vert \\
&&+2\lambda _{n}\left\Vert u_{n}-z_{n}\right\Vert \left\Vert
Au_{n}-Ap\right\Vert \\
&&+2\left( 1-\alpha _{n}\nu \right) a_{n}\left\Vert y_{n}-p\right\Vert
+\left( 1-\alpha _{n}\nu \right) a_{n}^{2}.
\end{eqnarray*}%
Since $\lim_{n\rightarrow \infty }\left\Vert Au_{n}-Ap\right\Vert =0$ and $%
\left\{ \left\Vert y_{n}-p\right\Vert \right\} $ is a bounded sequence, it
follows from (\ref{A16}) and conditions $(C1)$ and $(C2)$ that%
\begin{equation}
\lim_{n\rightarrow \infty }\left\Vert u_{n}-z_{n}\right\Vert =0.  \label{A20}
\end{equation}%
Also, by using (\ref{A18}) and (\ref{A20}), we get%
\begin{equation}
\lim_{n\rightarrow \infty }\left\Vert x_{n}-z_{n}\right\Vert =0.  \label{A24}
\end{equation}%
On the other side, we have%
\begin{eqnarray*}
\left\Vert x_{n}-y_{n}\right\Vert &\leq &\left\Vert x_{n}-u_{n}\right\Vert
+\left\Vert u_{n}-z_{n}\right\Vert +\left\Vert z_{n}-y_{n}\right\Vert \\
&=&\left\Vert x_{n}-u_{n}\right\Vert +\left\Vert u_{n}-z_{n}\right\Vert
+\beta _{n}\left( Sx_{n}-z_{n}\right) .
\end{eqnarray*}%
Since $\lim_{n\rightarrow \infty }\beta _{n}=0$, again from (\ref{A18}) and (%
\ref{A20}), we obtain%
\begin{equation}
\lim_{n\rightarrow \infty }\left\Vert x_{n}-y_{n}\right\Vert =0.  \label{A21}
\end{equation}%
Now, we show that $\lim_{n\rightarrow \infty }\left\Vert
x_{n}-Tx_{n}\right\Vert =0$. Before that, we need to show that $%
\lim_{m\rightarrow \infty }\left( \lim_{n\rightarrow \infty }\left\Vert
x_{n}-T^{m}x_{n}\right\Vert \right) =0$. For $n\geq m\geq 1$, we get%
\begin{eqnarray}
\left\Vert T^{n}y_{n}-T^{m}x_{n}\right\Vert &\leq &\left\Vert
T^{n}y_{n}-T^{n-1}y_{n}\right\Vert +\left\Vert
T^{n-1}y_{n}-T^{n-2}y_{n}\right\Vert  \notag \\
&&+\cdots +\left\Vert T^{m}y_{n}-T^{m}x_{n}\right\Vert  \notag \\
&\leq &\left\Vert T^{n}y_{n}-T^{n-1}y_{n}\right\Vert +\left\Vert
T^{n-1}y_{n}-T^{n-2}y_{n}\right\Vert  \notag \\
&&+\cdots +\left\Vert y_{n}-x_{n}\right\Vert +a_{m}\text{,}  \label{d1}
\end{eqnarray}%
and so%
\begin{eqnarray}
\left\Vert x_{n+1}-T^{m}x_{n}\right\Vert &=&\left\Vert
P_{C}t_{n}-P_{C}T^{m}x_{n}\right\Vert  \notag \\
&\leq &\left\Vert \alpha _{n}\rho Vx_{n}+\left( I-\alpha _{n}\mu F\right)
T^{n}y_{n}-T^{m}x_{n}\right\Vert  \notag \\
&\leq &\alpha _{n}\left\Vert \rho Vx_{n}-\mu FT^{n}y_{n}\right\Vert
+\left\Vert T^{n}y_{n}-T^{m}x_{n}\right\Vert  \notag \\
&\leq &\alpha _{n}\left\Vert \rho Vx_{n}-\mu FT^{n}y_{n}\right\Vert
+\left\Vert T^{n}y_{n}-T^{n-1}y_{n}\right\Vert  \notag \\
&&+\left\Vert T^{n-1}y_{n}-T^{n-2}y_{n}\right\Vert +\cdots +\left\Vert
y_{n}-x_{n}\right\Vert +a_{m}\text{.}  \label{d2}
\end{eqnarray}%
Hence, we obtain from (\ref{d1}) and (\ref{d2}) 
\begin{eqnarray}
\left\Vert x_{n}-T^{m}x_{n}\right\Vert &\leq &\left\Vert
x_{n}-x_{n+1}\right\Vert +\left\Vert x_{n+1}-T^{m}x_{n}\right\Vert  \notag \\
&\leq &\left\Vert x_{n}-x_{n+1}\right\Vert +\alpha _{n}\left\Vert \rho
Vx_{n}-\mu FT^{n}y_{n}\right\Vert  \notag \\
&&+\left\Vert T^{n}y_{n}-T^{n-1}y_{n}\right\Vert +\left\Vert
T^{n-1}y_{n}-T^{n-2}y_{n}\right\Vert  \notag \\
&&+\cdots +\left\Vert y_{n}-x_{n}\right\Vert +a_{m}  \notag
\end{eqnarray}%
Since $\left\Vert \rho Vx_{n}-\mu FT^{n}y_{n}\right\Vert $ is bounded and $%
a_{n}\rightarrow 0$,\ it follows from (\ref{A16}\textbf{), }(\ref{A21}%
\textbf{), }conditions $(C1)$\textbf{\ }and $(C3)$ that 
\begin{equation}
\lim_{m\rightarrow \infty }\left( \lim_{n\rightarrow \infty }\left\Vert
x_{n}-T^{m}x_{n}\right\Vert \right) =0\text{.}  \label{A30}
\end{equation}%
By using (\ref{A30}\textbf{) }and condition $(C3)$, we have 
\begin{equation*}
\left\Vert x_{n}-Tx_{n}\right\Vert \leq \left\Vert
x_{n}-T^{m}x_{n}\right\Vert +\left\Vert T^{m}x_{n}-Tx_{n}\right\Vert
\rightarrow 0\text{, as }n,m\rightarrow \infty .
\end{equation*}%
Since $\left\{ x_{n}\right\} $ is bounded, there exists a weak convergent
subsequence $\left\{ x_{n_{k}}\right\} $ of $\left\{ x_{n}\right\} $. Let $%
x_{n_{k}}\rightharpoonup w$ as $k\rightarrow \infty $. Then, Opial's
condition guarantee that the weakly subsequential limit of $\left\{
x_{n}\right\} $ is unique. Hence, this implies that $x_{n}\rightharpoonup w$%
, as\textbf{\ }$n\rightarrow \infty $.\ So, it follows from (\ref{A30}%
\textbf{), }Theorem \ref{C} and Lemma \ref{b} that\textbf{\ }$w\in Fix\left(
T\right) $\textbf{.} Therefore, $w_{w}\left( x_{n}\right) \subset Fix\left(
T\right) $.
\end{proof}

\begin{theorem}
\label{X} Assume that $(C1)$-$(C3)$\ hold. Then the sequence $\left\{
x_{n}\right\} $ generated by (\ref{4}) converges strongly to $x^{\ast }\in
\tciFourier $, which is the unique solution of the variational inequality%
\begin{equation}
\left\langle \left( \rho V-\mu F\right) x^{\ast },x-x^{\ast }\right\rangle
\leq 0,\text{ }\forall x\in \tciFourier .  \label{1}
\end{equation}
\end{theorem}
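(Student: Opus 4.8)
The plan is to combine the asymptotic regularity already harvested in \lemref{L1} and \lemref{L2} with the classical ``$\limsup$ trick'' and to close the argument with \lemref{Y}. I would first settle the well-posedness of the limit. Since $0\le\rho\gamma<\nu$ and one checks $\nu\le\mu\eta$ (the inequality $\eta\le L$, which itself follows from combining strong monotonicity with Lipschitz continuity of $F$, gives $\nu=1-\sqrt{1-\mu(2\eta-\mu L^{2})}\le\mu\eta$), \lemref{str} shows that $\mu F-\rho V$ is strongly monotone on the closed convex set $\tciFourier$ with coefficient $\nu-\rho\gamma>0$. Hence the variational inequality \eqref{1}, rewritten as $\langle(\mu F-\rho V)x^{\ast},x-x^{\ast}\rangle\ge0$ for all $x\in\tciFourier$, has a unique solution $x^{\ast}$; equivalently $x^{\ast}$ is the unique fixed point of the contraction $P_{\tciFourier}(I-(\mu F-\rho V))$.

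The heart of the argument is to prove
\begin{equation*}
\limsup_{n\rightarrow\infty}\left\langle\left(\rho V-\mu F\right)x^{\ast},x_{n+1}-x^{\ast}\right\rangle\le0.
\end{equation*}
Since $\{x_{n}\}$ is bounded, I would pick a subsequence $\{x_{n_{k}}\}$ attaining $\limsup_{n}\langle(\rho V-\mu F)x^{\ast},x_{n}-x^{\ast}\rangle$ and, passing to a further subsequence, assume $x_{n_{k}}\rightharpoonup w$. By \lemref{L2}(ii) we already have $w\in Fix(T)$, so the main obstacle is to verify the two remaining memberships $w\in\Omega$ and $w\in GMEP(G,\varphi,B)$. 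Once $w\in\tciFourier$, the variational inequality \eqref{1} gives $\langle(\rho V-\mu F)x^{\ast},w-x^{\ast}\rangle\le0$, and since $\|x_{n+1}-x_{n}\|\rightarrow0$ by \lemref{L2}(i), the displayed $\limsup$ over $x_{n+1}$ follows from the one over $x_{n}$.

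To place $w$ in $\Omega$, I would exploit $z_{n}=P_{C}(u_{n}-\lambda_{n}Au_{n})$ together with projection inequality~(3) and the limits $\|u_{n}-z_{n}\|\rightarrow0$ in \eqref{A20} and $\|x_{n}-u_{n}\|\rightarrow0$ in \eqref{A18} (so that $u_{n_{k}},z_{n_{k}}\rightharpoonup w$ as well). Introducing the maximal monotone operator $Mv=Av+N_{C}v$ (with $N_{C}$ the normal cone of $C$) and using that $\lambda_{n}$ is bounded away from $0$, that $A$ is inverse strongly monotone (hence monotone and Lipschitz), and the graph-closedness of $M$, one deduces $0\in Mw$, i.e.\ $w\in\Omega$. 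For $w\in GMEP(G,\varphi,B)$ I would begin from the defining inequality of $u_{n}$, apply the monotonicity (A2) of $G$, use $\|u_{n}-x_{n}\|\rightarrow0$ and $r_{n}\ge\mu>0$ to annihilate the term $\tfrac{1}{r_{n}}\langle y-u_{n},u_{n}-x_{n}\rangle$ in the limit, and then run the Minty substitution $y_{t}=ty+(1-t)w$, letting $t\downarrow0$ and invoking (A3), (A4) and the lower semicontinuity of $\varphi$. This equilibrium step is the most delicate part of the proof.

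Finally I would derive the scalar recursion. Writing $t_{n}=\alpha_{n}\rho Vx_{n}+(I-\alpha_{n}\mu F)T^{n}y_{n}$ and using $x_{n+1}=P_{C}t_{n}$, the projection characterization $\langle t_{n}-x_{n+1},x_{n+1}-x^{\ast}\rangle\ge0$ gives $\|x_{n+1}-x^{\ast}\|^{2}\le\langle t_{n}-x^{\ast},x_{n+1}-x^{\ast}\rangle$. Splitting $t_{n}-x^{\ast}=\alpha_{n}\rho(Vx_{n}-Vx^{\ast})+\alpha_{n}(\rho V-\mu F)x^{\ast}+[(I-\alpha_{n}\mu F)T^{n}y_{n}-(I-\alpha_{n}\mu F)T^{n}x^{\ast}]$, bounding the last bracket via \lemref{cont} and near nonexpansiveness by $(1-\alpha_{n}\nu)(\|y_{n}-x^{\ast}\|+a_{n})$, using \eqref{2} for $\|y_{n}-x^{\ast}\|$, and applying Cauchy--Schwarz together with $2ab\le a^{2}+b^{2}$, I obtain an estimate of the form
\begin{equation*}
\left\Vert x_{n+1}-x^{\ast}\right\Vert ^{2}\le\left(1-\Gamma_{n}\right)\left\Vert x_{n}-x^{\ast}\right\Vert ^{2}+\Gamma_{n}b_{n},\qquad\Gamma_{n}=\alpha_{n}\left(\nu-\rho\gamma\right),
\end{equation*}
where $b_{n}$ gathers the terms $\tfrac{\beta_{n}}{\alpha_{n}}$ and $\tfrac{a_{n}}{\alpha_{n}}$ (both $\to0$ by (C2)) and $\langle(\rho V-\mu F)x^{\ast},x_{n+1}-x^{\ast}\rangle$. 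Then $\sum\Gamma_{n}=\infty$ by (C1) and $\limsup_{n}b_{n}\le0$ by the previous paragraph, so \lemref{Y} yields $\|x_{n}-x^{\ast}\|\rightarrow0$, completing the proof.
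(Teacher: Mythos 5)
Your proposal follows essentially the same route as the paper: uniqueness of $x^{\ast}$ via Lemma \ref{str}, locating the weak cluster point $w$ in $Fix(T)\cap\Omega\cap GMEP(G,\varphi,B)$ by the maximal-monotone-operator argument and the Minty substitution, establishing the $\limsup$ inequality along a weakly convergent subsequence, and closing with the projection-based scalar recursion and Lemma \ref{Y}. The only addition is your explicit verification that $\nu\le\mu\eta$ (so that $\rho\gamma<\nu$ indeed licenses Lemma \ref{str}), a point the paper passes over silently; otherwise the arguments coincide.
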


\begin{proof}
From Lemma \ref{str}, since the operator $\mu F-\rho V$ is $\left( \mu \eta
-\rho \gamma \right) $-strongly monotone we get the uniqueness of the
solution of the variational inequality (\ref{1}). Let denote this solution
by $x^{\ast }\in \tciFourier $.

Now, we divide our proof into three steps.

\textbf{Step 1.} From Lemma \ref{L1}, since $\left\{ x_{n}\right\} $ is a
bounded sequence, there exists an element $w$ such that $x_{n}%
\rightharpoonup w$. Now, we show that $w\in \tciFourier =Fix\left( T\right)
\cap \Omega \cap GMEP\left( G,\varphi ,B\right) .$ Firstly, it follows from
Lemma \ref{L2} (ii) that $w\in Fix\left( T\right) $. Secondly, we show that $%
w\in GMEP\left( G,\varphi ,B\right) $. Since $u_{n}=T_{r_{n}}\left(
x_{n}-r_{n}Bx_{n}\right) ,$ we have%
\begin{equation*}
G\left( u_{n},y\right) +\varphi \left( y\right) -\varphi \left( u_{n}\right)
+\left\langle Bx_{n},y-u_{n}\right\rangle +\frac{1}{r_{n}}\left\langle
y-u_{n},u_{n}-x_{n}\right\rangle \geq 0\text{, }\forall y\in C\text{.}
\end{equation*}%
which implies that%
\begin{equation*}
\varphi \left( y\right) -\varphi \left( u_{n}\right) +\left\langle
Bx_{n},y-u_{n}\right\rangle +\frac{1}{r_{n}}\left\langle
y-u_{n},u_{n}-x_{n}\right\rangle \geq G\left( y,u_{n}\right) \text{, }%
\forall y\in C,
\end{equation*}%
and hence%
\begin{equation}
\varphi \left( y\right) -\varphi \left( u_{n_{k}}\right) +\left\langle
Bx_{n_{k}},y-u_{n_{k}}\right\rangle +\left\langle y-u_{n_{k}},\frac{%
u_{n_{k}}-x_{n_{k}}}{r_{n_{k}}}\right\rangle \geq G\left( y,u_{n_{k}}\right) 
\text{, }\forall y\in C.  \label{A26}
\end{equation}%
Let $y\in C$ and $y_{t}=ty+\left( 1-t\right) w$, for $t\in \left( 0,1\right] 
$. Then, $y_{t}\in C$. From (\ref{A26}), we obtain%
\begin{eqnarray}
\left\langle By_{t},y_{t}-u_{n_{k}}\right\rangle &\geq &\left\langle
By_{t},y_{t}-u_{n_{k}}\right\rangle -\varphi \left( y_{t}\right) +\varphi
\left( u_{n_{k}}\right) -\left\langle Bx_{n_{k}},y_{t}-u_{n_{k}}\right\rangle
\notag \\
&&-\left\langle y_{t}-u_{n_{k}},\frac{u_{n_{k}}-x_{n_{k}}}{r_{n_{k}}}%
\right\rangle +G\left( y_{t},u_{n_{k}}\right)  \notag \\
&=&\left\langle By_{t}-Bx_{n_{k}},y_{t}-u_{n_{k}}\right\rangle +\left\langle
Bu_{n_{k}}-Bx_{n_{k}},y_{t}-u_{n_{k}}\right\rangle -\varphi \left(
y_{t}\right)  \notag \\
&&+\varphi \left( u_{n_{k}}\right) -\left\langle y_{t}-u_{n_{k}},\frac{%
u_{n_{k}}-x_{n_{k}}}{r_{n_{k}}}\right\rangle +G\left( y_{t},u_{n_{k}}\right)
.  \label{A27}
\end{eqnarray}%
On the other hand, since $B$ is Lipschitz continuous, using (\ref{A18}) we
obtain $\lim_{k\rightarrow \infty }\left\Vert
Bu_{n_{k}}-Bx_{n_{k}}\right\Vert =0$. So, it follows from (\ref{A27}), $%
u_{n_{k}}\rightharpoonup w$ and the monotonicity of $B$ that%
\begin{equation}
\left\langle By_{t},y_{t}-w\right\rangle \geq G\left( y_{t},w\right)
-\varphi \left( y_{t}\right) +\varphi \left( w\right) .  \label{A28}
\end{equation}%
By using the inequality (\ref{A28}) and assumptions (A1)-(A4), we get%
\begin{eqnarray*}
0 &=&G\left( y_{t},y_{t}\right) +\varphi \left( y_{t}\right) -\varphi \left(
y_{t}\right) \\
&\leq &tG\left( y_{t},y\right) +\left( 1-t\right) G\left( y_{t},w\right)
+t\varphi \left( y\right) +\left( 1-t\right) \varphi \left( w\right)
-\varphi \left( y_{t}\right) \\
&=&t\left[ G\left( y_{t},y\right) +\varphi \left( y\right) -\varphi \left(
y_{t}\right) \right] +\left( 1-t\right) \left[ G\left( y_{t},w\right)
+\varphi \left( w\right) -\varphi \left( y_{t}\right) \right] \\
&\leq &t\left[ G\left( y_{t},y\right) +\varphi \left( y\right) -\varphi
\left( y_{t}\right) \right] +\left( 1-t\right) \left\langle
By_{t},y_{t}-w\right\rangle \\
&=&t\left[ G\left( y_{t},y\right) +\varphi \left( y\right) -\varphi \left(
y_{t}\right) \right] +\left( 1-t\right) t\left\langle
By_{t},y-w\right\rangle .
\end{eqnarray*}%
The last inequality implies that%
\begin{equation*}
G\left( y_{t},y\right) +\varphi \left( y\right) -\varphi \left( y_{t}\right)
+\left( 1-t\right) \left\langle By_{t},y-w\right\rangle \geq 0.
\end{equation*}%
If we take limit as $t\rightarrow 0^{+}$ for all $y\in C$, we get%
\begin{equation*}
G\left( w,y\right) +\varphi \left( y\right) -\varphi \left( w\right)
+\left\langle Bw,y-w\right\rangle \geq 0\text{, }\forall y\in C.
\end{equation*}%
Hence, we have $w\in GMEP\left( G,\varphi ,B\right) $. Finally, we show that 
$w\in \Omega $. Let $N_{C}v$ be the normal cone to $C$ at $v\in C$, i.e.,%
\begin{equation*}
N_{C}v=\left\{ w\in H:\left\langle v-u,w\right\rangle \geq 0,\forall u\in
C\right\} .
\end{equation*}%
Let $K$ be a mapping defined by%
\begin{equation*}
Kv=\left\{ 
\begin{array}{ll}
Av+N_{C}v & ,\text{ }v\in C, \\ 
\emptyset & ,\text{ }v\notin C.%
\end{array}%
\right.
\end{equation*}%
Then, it is known that $K$ is maximal monotone mapping. Let $\left(
v,u\right) \in G\left( K\right) .$ From the definition of the the mapping $K$%
, since $u-Av\in N_{C}v$ and $z_{n}\in C,$ we get%
\begin{equation}
\left\langle v-z_{n},u-Av\right\rangle \geq 0.  \label{A23}
\end{equation}%
Also, by using the definition of $z_{n},$ we get%
\begin{equation*}
\left\langle v-z_{n},z_{n}-u_{n}-\lambda _{n}Au_{n}\right\rangle \geq 0
\end{equation*}%
and so,%
\begin{equation*}
\left\langle v-z_{n},\frac{z_{n}-u_{n}}{\lambda _{n}}+Au_{n}\right\rangle
\geq 0.
\end{equation*}%
Hence, from (\ref{A23}), we obtain%
\begin{eqnarray}
\left\langle v-z_{n_{i}},u\right\rangle &\geq &\left\langle
v-z_{n_{i}},Av\right\rangle  \notag \\
&\geq &\left\langle v-z_{n_{i}},Av\right\rangle -\left\langle v-z_{n_{i}},%
\frac{z_{n_{i}}-u_{n_{i}}}{\lambda _{n_{i}}}+Au_{n_{i}}\right\rangle  \notag
\\
&=&\left\langle v-z_{n_{i}},Av-Au_{n_{i}}-\frac{z_{n_{i}}-u_{n_{i}}}{\lambda
_{n_{i}}}\right\rangle  \notag \\
&=&\left\langle v-z_{n_{i}},Av-Az_{n_{i}}\right\rangle +\left\langle
v-z_{n_{i}},Az_{n_{i}}-Au_{n_{i}}\right\rangle  \notag \\
&&-\left\langle v-z_{n_{i}},\frac{z_{n_{i}}-u_{n_{i}}}{\lambda _{n_{i}}}%
\right\rangle  \notag \\
&\geq &\left\langle v-z_{n_{i}},Az_{n_{i}}-Au_{n_{i}}\right\rangle
-\left\langle v-z_{n_{i}},\frac{z_{n_{i}}-u_{n_{i}}}{\lambda _{n_{i}}}%
\right\rangle .  \label{A25}
\end{eqnarray}%
So, it follows from (\ref{A18}), (\ref{A20}) and (\ref{A24}) that $%
u_{n_{i}}\rightharpoonup w$ and $z_{n_{i}}\rightharpoonup w$ for $%
i\rightarrow \infty .$ So, from (\ref{A25}), we have%
\begin{equation*}
\left\langle v-w,u\right\rangle \geq 0.
\end{equation*}%
Since $K$ is maximal monotone, we have $w\in K^{-1}0$ and hence $w\in \Omega 
$. Thus, we obtain $w\in \tciFourier =Fix\left( T\right) \cap \Omega \cap
GMEP\left( G\right) $.

\textbf{Step 2.} In this step, we show that $\limsup_{n\rightarrow \infty
}\left\langle \left( \rho V-\mu F\right) x^{\ast },x_{n}-x^{\ast
}\right\rangle \leq 0$, where $x^{\ast }$\ is the unique solution of the
variational inequality (\ref{1}). Since the sequence $\left\{ x_{n}\right\} $
is bounded, it has a weak convergent subsequence $\left\{ x_{n_{k}}\right\} $
such that%
\begin{equation*}
\limsup_{n\rightarrow \infty }\left\langle \left( \rho V-\mu F\right)
x^{\ast },x_{n}-x^{\ast }\right\rangle =\limsup_{k\rightarrow \infty
}\left\langle \left( \rho V-\mu F\right) x^{\ast },x_{n_{k}}-x^{\ast
}\right\rangle .
\end{equation*}%
Let $x_{n_{k}}\rightharpoonup w$, as $k\rightarrow \infty $. Since the Opial
condition guarantee that $x_{n}\rightharpoonup w$, we know from Step 1 that $%
w\in \tciFourier $. Hence%
\begin{equation*}
\limsup_{n\rightarrow \infty }\left\langle \left( \rho V-\mu F\right)
x^{\ast },x_{n}-x^{\ast }\right\rangle =\left\langle \left( \rho V-\mu
F\right) x^{\ast },w-x^{\ast }\right\rangle \leq 0.
\end{equation*}%
\textbf{Step 3.} Finally, we show that the sequence $\left\{ x_{n}\right\} $
generated by (\ref{4}) converges strongly to the point $x^{\ast }$ which\ is
the unique solution of the variational inequality (\ref{1}). From the
definition of the iterative sequence $\left\{ x_{n}\right\} $, we get%
\begin{eqnarray}
\left\Vert x_{n+1}-x^{\ast }\right\Vert ^{2} &=&\left\langle
P_{C}t_{n}-x^{\ast },x_{n+1}-x^{\ast }\right\rangle  \notag \\
&=&\left\langle P_{C}t_{n}-t_{n},x_{n+1}-x^{\ast }\right\rangle
+\left\langle t_{n}-x^{\ast },x_{n+1}-x^{\ast }\right\rangle .  \label{A29}
\end{eqnarray}%
Also, from the property (3) of the metric projection $P_{C}$, since it
satisfies the inequality%
\begin{equation*}
\left\langle x-P_{C}x,y-P_{C}x\right\rangle \leq 0\text{, }\forall x\in H%
\text{, }y\in C,
\end{equation*}
we have%
\begin{eqnarray*}
\left\Vert x_{n+1}-x^{\ast }\right\Vert ^{2} &\leq &\left\langle
t_{n}-x^{\ast },x_{n+1}-x^{\ast }\right\rangle \\
&=&\left\langle \alpha _{n}\rho Vx_{n}+\left( I-\alpha _{n}\mu F\right)
T^{n}y_{n}-x^{\ast },x_{n+1}-x^{\ast }\right\rangle \\
&=&\left\langle \alpha _{n}\left( \rho Vx_{n}-\mu Fx^{\ast }\right) +\left(
I-\alpha _{n}\mu F\right) T^{n}y_{n}\right. \\
&&\left. -\left( I-\alpha _{n}\mu F\right) T^{n}x^{\ast },x_{n+1}-x^{\ast
}\right\rangle \\
&=&\alpha _{n}\rho \left\langle Vx_{n}-Vx^{\ast },x_{n+1}-x^{\ast
}\right\rangle +\alpha _{n}\left\langle \rho Vx^{\ast }-\mu Fx^{\ast
},x_{n+1}-x^{\ast }\right\rangle \\
&&+\left\langle \left( I-\alpha _{n}\mu F\right) T^{n}y_{n}-\left( I-\alpha
_{n}\mu F\right) T^{n}x^{\ast },x_{n+1}-x^{\ast }\right\rangle .
\end{eqnarray*}%
So, from Lemma \ref{cont}, we obtain%
\begin{eqnarray*}
\left\Vert x_{n+1}-x^{\ast }\right\Vert ^{2} &\leq &\alpha _{n}\rho \gamma
\left\Vert x_{n}-x^{\ast }\right\Vert \left\Vert x_{n+1}-x^{\ast
}\right\Vert +\alpha _{n}\left\langle \rho Vx^{\ast }-\mu Fx^{\ast
},x_{n+1}-x^{\ast }\right\rangle \\
&&+\left( 1-\alpha _{n}\nu \right) \left( \left\Vert y_{n}-x^{\ast
}\right\Vert +a_{n}\right) \left\Vert x_{n+1}-x^{\ast }\right\Vert \\
&\leq &\alpha _{n}\rho \gamma \left\Vert x_{n}-x^{\ast }\right\Vert
\left\Vert x_{n+1}-x^{\ast }\right\Vert +\alpha _{n}\left\langle \rho
Vx^{\ast }-\mu Fx^{\ast },x_{n+1}-x^{\ast }\right\rangle \\
&&+\left( 1-\alpha _{n}\nu \right) \left( \beta _{n}\left\Vert x_{n}-x^{\ast
}\right\Vert +\beta _{n}\left\Vert Sx^{\ast }-x^{\ast }\right\Vert \right. \\
&&+\left. \left( 1-\beta _{n}\right) \left\Vert z_{n}-x^{\ast }\right\Vert
+a_{n}\right) \left\Vert x_{n+1}-x^{\ast }\right\Vert \\
&\leq &\alpha _{n}\rho \gamma \left\Vert x_{n}-x^{\ast }\right\Vert
\left\Vert x_{n+1}-x^{\ast }\right\Vert +\alpha _{n}\left\langle \rho
Vx^{\ast }-\mu Fx^{\ast },x_{n+1}-x^{\ast }\right\rangle \\
&&+\left( 1-\alpha _{n}\nu \right) \left( \beta _{n}\left\Vert x_{n}-x^{\ast
}\right\Vert +\beta _{n}\left\Vert Sx^{\ast }-x^{\ast }\right\Vert \right. \\
&&+\left. \left( 1-\beta _{n}\right) \left\Vert x_{n}-x^{\ast }\right\Vert
+a_{n}\right) \left\Vert x_{n+1}-x^{\ast }\right\Vert \\
&\leq &\left( 1-\alpha _{n}\left( \nu -\rho \gamma \right) \right)
\left\Vert x_{n}-x^{\ast }\right\Vert \left\Vert x_{n+1}-x^{\ast }\right\Vert
\\
&&+\alpha _{n}\left\langle \rho Vx^{\ast }-\mu Fx^{\ast },x_{n+1}-x^{\ast
}\right\rangle \\
&&+\left( 1-\alpha _{n}\nu \right) \beta _{n}\left\Vert Sx^{\ast }-x^{\ast
}\right\Vert \left\Vert x_{n+1}-x^{\ast }\right\Vert \\
&&+\left( 1-\alpha _{n}\nu \right) a_{n}\left\Vert x_{n+1}-x^{\ast
}\right\Vert \\
&\leq &\frac{\left( 1-\alpha _{n}\left( \nu -\rho \gamma \right) \right) }{2}%
\left( \left\Vert x_{n}-x^{\ast }\right\Vert ^{2}+\left\Vert x_{n+1}-x^{\ast
}\right\Vert ^{2}\right) \\
&&+\alpha _{n}\left\langle \rho Vx^{\ast }-\mu Fx^{\ast },x_{n+1}-x^{\ast
}\right\rangle \\
&&+\left( 1-\alpha _{n}\nu \right) \beta _{n}\left\Vert Sx^{\ast }-x^{\ast
}\right\Vert \left\Vert x_{n+1}-x^{\ast }\right\Vert \\
&&+\left( 1-\alpha _{n}\nu \right) a_{n}\left\Vert x_{n+1}-x^{\ast
}\right\Vert .
\end{eqnarray*}%
The last inequality implies that%
\begin{eqnarray*}
\left\Vert x_{n+1}-x^{\ast }\right\Vert ^{2} &\leq &\frac{\left( 1-\alpha
_{n}\left( \nu -\rho \gamma \right) \right) }{\left( 1+\alpha _{n}\left( \nu
-\rho \gamma \right) \right) }\left\Vert x_{n}-x^{\ast }\right\Vert ^{2} \\
&&+\frac{2\alpha _{n}}{\left( 1+\alpha _{n}\left( \nu -\rho \gamma \right)
\right) }\left\langle \rho Vx^{\ast }-\mu Fx^{\ast },x_{n+1}-x^{\ast
}\right\rangle \\
&&+\frac{2\beta _{n}}{\left( 1+\alpha _{n}\left( -\rho \gamma \right)
\right) }\left\Vert Sx^{\ast }-x^{\ast }\right\Vert \left\Vert
x_{n+1}-x^{\ast }\right\Vert \\
&&+\frac{2a_{n}}{\left( 1+\alpha _{n}\left( \nu -\rho \gamma \right) \right) 
}\left\Vert x_{n+1}-x^{\ast }\right\Vert \\
&\leq &\left( 1-\alpha _{n}\left( \nu -\rho \gamma \right) \right)
\left\Vert x_{n}-x^{\ast }\right\Vert ^{2}+\alpha _{n}\left( \nu -\rho
\gamma \right) \theta _{n}
\end{eqnarray*}%
where%
\begin{equation*}
\theta _{n}=\frac{2}{\left( 1+\alpha _{n}\left( \nu -\rho \gamma \right)
\right) \left( \nu -\rho \gamma \right) }\left[ 
\begin{array}{c}
\left\langle \rho Vx^{\ast }-\mu Fx^{\ast },x_{n+1}-x^{\ast }\right\rangle +%
\frac{\beta _{n}}{\alpha _{n}}M_{3} \\ 
+\frac{a_{n}}{\alpha _{n}}\left\Vert x_{n+1}-x^{\ast }\right\Vert%
\end{array}%
\right] ,
\end{equation*}%
and 
\begin{equation*}
\sup_{n\geq 1}\left\{ \left\Vert Sx^{\ast }-x^{\ast }\right\Vert \left\Vert
x_{n+1}-x^{\ast }\right\Vert \right\} \leq M_{3}.
\end{equation*}%
Since $\frac{\beta _{n}}{\alpha _{n}}\rightarrow 0$ and $\frac{a_{n}}{\alpha
_{n}}\rightarrow 0,$ we obtain%
\begin{equation*}
\limsup_{n\rightarrow \infty }\theta _{n}\leq 0.
\end{equation*}%
So, it follows from Lemma \ref{Y} that the sequence $\left\{ x_{n}\right\} $
generated by (\ref{4}) converges strongly to $x^{\ast }\in \tciFourier $
which is the unique solution of variational inequality (\ref{1}).
\end{proof}

\begin{remark}
Under the suitable assumptions on parameters and operators in Theorem \ref{X}%
, we have the corresponding results of Yao et. al. \cite{YCL}, Marino and Xu 
\cite{marxu}, Ceng et. al. \cite{Ceng}, Wang and Xu \cite{WX}, Moudafi \cite%
{mou}, Xu \cite{xu1}, Tian \cite{tian} and Suzuki \cite{suzuki}. So, our
iterative process is a generalization form of many iteraitve processes
studied by above authors.
\end{remark}

Consequence of Theorem \ref{X}, we can obtain the following corollaries.

\begin{corollary}
\label{Y1} Let $C$ be a nonempty closed convex subset of a real Hilbert
space $H.$ Let $B:C\rightarrow H$ be $\theta $-inverse strongly monotone
mapping, $G:C\times C\rightarrow 
\mathbb{R}
$ be a bifunction satisfying assumptions (A1)-(A4), $\varphi :C\rightarrow 
\mathbb{R}
$ be a lower semicontinuous and convex function, $S:C\rightarrow H$ be a
nonexpansive mapping and $T$ be a demicontinuous nearly nonexpansive mapping
with the sequence $\left\{ a_{n}\right\} $ such that $\tciFourier
:=Fix\left( T\right) \cap \Omega \cap GMEP\left( G,\varphi ,B\right) \neq
\emptyset $. Let $V:C\rightarrow H$ be a $\gamma $-Lipschitzian mapping, $%
F:C\rightarrow H$ be a $L$-Lipschitzian and $\eta $-strongly monotone
operator such that these coefficients satisfy $0<\mu <\frac{2\eta }{L^{2}}$, 
$0\leq \rho \gamma <\nu $, where $\nu =1-\sqrt{1-\mu \left( 2\eta -\mu
L^{2}\right) }$. Assume that either (B1) or (B2) holds. For an arbitrarily
initial value $x_{1}\in C,$ consider the sequence $\left\{ x_{n}\right\} $
in $C$ generated by%
\begin{equation}
\left\{ 
\begin{array}{l}
G\left( u_{n},y\right) +\varphi \left( y\right) -\varphi \left( u_{n}\right)
+\left\langle Bx_{n},y-u_{n}\right\rangle +\frac{1}{r_{n}}\left\langle
y-u_{n},u_{n}-x_{n}\right\rangle \geq 0\text{, }\forall y\in C\text{\ \ \ \
\ \ } \\ 
y_{n}=P_{C}\left[ \beta _{n}Sx_{n}+\left( 1-\beta _{n}\right) u_{n}\right] ,%
\text{ } \\ 
x_{n+1}=P_{C}\left[ \alpha _{n}\rho Vx_{n}+\left( I-\alpha _{n}\mu F\right)
T^{n}y_{n}\right] ,\text{ }n\geq 1,%
\end{array}%
\right.  \label{S1}
\end{equation}%
where $\left\{ r_{n}\right\} \subset \left( 0,2\theta \right) $, $\left\{
\alpha _{n}\right\} $ and $\left\{ \beta _{n}\right\} $ are sequences in $%
\left[ 0,1\right] $ satisfying the conditions $(C1)$-$(C3)$ except the
condition $\lim_{n\rightarrow \infty }\frac{\left\vert \lambda _{n}-\lambda
_{n-1}\right\vert }{\alpha _{n}}=0$. Then, the sequence $\left\{
x_{n}\right\} $ generated by (\ref{S1})\ converges strongly to $x^{\ast }\in
\tciFourier $, where $x^{\ast }$\ is the unique solution of variational
inequality (\ref{1}).
\end{corollary}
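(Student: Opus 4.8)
The plan is to obtain Corollary \ref{Y1} as the special case $A \equiv 0$ of Theorem \ref{X}. First I would observe that when $A = 0$ the intermediate step of the master iteration (\ref{4}) collapses: since $u_n \in C$, we have $z_n = P_C\!\left(u_n - \lambda_n A u_n\right) = P_C(u_n) = u_n$, so (\ref{4}) reduces verbatim to (\ref{S1}). Thus every sequence $\{x_n\}$ produced by (\ref{S1}) is also produced by (\ref{4}) with the choice $A = 0$, and I may invoke the conclusion of Theorem \ref{X} once all of its hypotheses are verified. To identify the relevant solution set, note that with $A = 0$ the variational inequality $VI(C,A)$ requires $x \in C$ with $\langle 0, y - x\rangle \geq 0$ for all $y \in C$, which holds for every $x \in C$; hence $\Omega = C$, the constraint $w \in \Omega$ is automatic, and $\tciFourier = Fix(T) \cap GMEP(G,\varphi,B)$. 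The existence assumption $\tciFourier \neq \emptyset$ is in force, so the unique solvability of (\ref{1}) follows exactly as in the opening paragraph of the proof of Theorem \ref{X}, from the strong monotonicity of $\mu F - \rho V$ provided by Lemma \ref{str}.

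The key point is to explain why the control condition $\lim_{n\to\infty} \frac{|\lambda_n - \lambda_{n-1}|}{\alpha_n} = 0$ may be dropped. Tracing this hypothesis through the argument, it enters only via the estimate (\ref{A3}) for $\|z_n - z_{n-1}\|$, where it multiplies the factor $\|A u_{n-1}\|$, and correspondingly through the term $\frac{|\lambda_n - \lambda_{n-1}|}{\alpha_n}$ inside $\delta_n$ in the proof of Lemma \ref{L2}. Since $A = 0$ forces $A u_{n-1} = 0$ for all $n$, both occurrences vanish identically. Consequently the bound $\limsup_{n\to\infty}\delta_n \leq 0$ of (\ref{A14}), and hence $\lim_{n\to\infty}\|x_{n+1}-x_n\| = 0$ in (\ref{A16}), follows from conditions $(C1)$--$(C3)$ alone, with no requirement on $\{\lambda_n\}$.

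The remaining estimates of Lemma \ref{L1}, Lemma \ref{L2}, and Theorem \ref{X} carry over unchanged: with $A = 0$ the terms $\lambda_n(2\alpha - \lambda_n)\|Au_n - Ap\|^2$ drop out of (\ref{A15}), the conclusions $\lim_{n\to\infty}\|Au_n - Ap\| = 0$ and $\lim_{n\to\infty}\|u_n - z_n\| = 0$ hold trivially because $z_n = u_n$, and the step establishing $w \in \Omega$ becomes vacuous (the maximal monotone operator $K = A + N_C$ reduces to $N_C$, so $w \in K^{-1}0 = C = \Omega$). I would then conclude, exactly as in Step 3 of Theorem \ref{X} via Lemma \ref{Y}, that $\{x_n\}$ converges strongly to the unique solution $x^{\ast}$ of (\ref{1}). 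Since this is a direct specialization, I do not anticipate a genuine obstacle; the only care required is to confirm that no other estimate secretly depends on $A$ being nonzero, and the trace above shows that the single dependence is precisely the one eliminated by $A u_n = 0$.
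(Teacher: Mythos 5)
Your proposal is correct and is essentially the argument the paper intends: the corollary is obtained from Theorem \ref{X} by taking $A\equiv 0$ (which is $\alpha$-inverse strongly monotone for every $\alpha$), so that $z_n=P_C(u_n)=u_n$, the scheme (\ref{4}) collapses to (\ref{S1}), $\Omega=C$, and the only occurrences of the condition $\lim_{n\rightarrow\infty}|\lambda_n-\lambda_{n-1}|/\alpha_n=0$ are multiplied by $\Vert Au_{n-1}\Vert=0$ and hence vanish. The paper states the corollary as a direct consequence without proof, and your trace of where each $A$-dependent term disappears is exactly the verification needed.
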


\begin{corollary}
Let $C$ be a nonempty closed convex subset of a real Hilbert space $H.$ Let $%
B:C\rightarrow H$ be $\theta $-inverse strongly monotone mapping, $G:C\times
C\rightarrow 
\mathbb{R}
$ be a bifunction satisfying assumptions (A1)-(A4), $\varphi :C\rightarrow 
\mathbb{R}
$ be a lower semicontinuous and convex function and $T$ be a demicontinuous
nearly nonexpansive mapping with the sequence $\left\{ a_{n}\right\} $ such
that $\tciFourier :=Fix\left( T\right) \cap \Omega \cap GMEP\left( G,\varphi
,B\right) \neq \emptyset $. Let $V:C\rightarrow H$ be a $\gamma $%
-Lipschitzian mapping, $F:C\rightarrow H$ be a $L$-Lipschitzian and $\eta $%
-strongly monotone operator such that these coefficients satisfy $0<\mu <%
\frac{2\eta }{L^{2}}$, $0\leq \rho \gamma <\nu $, where $\nu =1-\sqrt{1-\mu
\left( 2\eta -\mu L^{2}\right) }$. Assume that either (B1) or (B2) holds.
For an arbitrarily initial value $x_{1}\in C,$ consider the sequence $%
\left\{ x_{n}\right\} $ in $C$ generated by%
\begin{equation}
\left\{ 
\begin{array}{l}
G\left( u_{n},y\right) +\varphi \left( y\right) -\varphi \left( u_{n}\right)
+\left\langle Bx_{n},y-u_{n}\right\rangle +\frac{1}{r_{n}}\left\langle
y-u_{n},u_{n}-x_{n}\right\rangle \geq 0\text{, }\forall y\in C\text{\ \ \ \
\ \ } \\ 
x_{n+1}=P_{C}\left[ \alpha _{n}\rho Vx_{n}+\left( I-\alpha _{n}\mu F\right)
T^{n}u_{n}\right] ,\text{ }n\geq 1,%
\end{array}%
\right.  \label{S2}
\end{equation}%
where $\left\{ r_{n}\right\} \subset \left( 0,2\theta \right) $, $\left\{
\alpha _{n}\right\} $ is a sequence in $\left[ 0,1\right] $ satisfying the
conditions $(C1)$-$(C3)$ except the conditions $\lim_{n\rightarrow \infty }%
\frac{\beta _{n}}{\alpha _{n}}=0$, $\frac{\left\vert \lambda _{n}-\lambda
_{n-1}\right\vert }{\alpha _{n}}=0$ and $\lim_{n\rightarrow \infty }\frac{%
\left\vert \beta _{n}-\beta _{n-1}\right\vert }{\alpha _{n}}=0$. Then, the
sequence $\left\{ x_{n}\right\} $ generated by (\ref{S2})\ converges
strongly to $x^{\ast }\in \tciFourier $, where $x^{\ast }$\ is the unique
solution of the variational inequality (\ref{1}).
\end{corollary}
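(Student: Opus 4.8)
The plan is to obtain this corollary as a direct specialization of \thmref{X}, so that no fresh convergence argument is needed. Concretely, in the data of \thmref{X} I would take the inverse strongly monotone mapping $A$ to be the zero operator, take the hierarchical mapping $S$ to be the identity $I$ (any nonexpansive self-map would serve), and set the control sequence $\beta_n\equiv 0$. Note first that $A\equiv 0$ is $\alpha$-inverse strongly monotone for every $\alpha>0$, since $\langle Ax-Ay,x-y\rangle=0=\alpha\Vert Ax-Ay\Vert^{2}$; thus I may fix any $\alpha>0$ and choose $\{\lambda_n\}\subset(0,2\alpha)$ arbitrarily. With these choices the variational inequality set degenerates, $\Omega=VI(C,0)=C$, so the target set becomes $\tciFourier=Fix(T)\cap GMEP(G,\varphi,B)$, consistent with the statement.

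The next step is to check that the iteration (\ref{4}) collapses to (\ref{S2}) under this specialization. By \lemref{7} we have $u_n=T_{r_n}(x_n-r_nBx_n)\in C$. Hence, since $A\equiv 0$, $z_n=P_C(u_n-\lambda_n Au_n)=P_C u_n=u_n$, independently of the value of $\lambda_n$. Then, because $\beta_n\equiv 0$, the third line of (\ref{4}) gives $y_n=P_C[(1-\beta_n)z_n]=P_C z_n=z_n=u_n$. Substituting $y_n=u_n$ into the last line of (\ref{4}) reproduces exactly the update $x_{n+1}=P_C[\alpha_n\rho Vx_n+(I-\alpha_n\mu F)T^n u_n]$ of (\ref{S2}), while the first (equilibrium) line is unchanged.

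It then remains to confirm that the hypotheses of \thmref{X} are met and that the three control conditions excluded in the corollary are genuinely unnecessary. Since $\beta_n\equiv 0$, the requirements $\lim_n\beta_n/\alpha_n=0$ and $\lim_n|\beta_n-\beta_{n-1}|/\alpha_n=0$ hold trivially and may be dropped. For $\{\lambda_n\}$ I would observe that in every estimate of \lemref{L1} and \lemref{L2} the factor $|\lambda_n-\lambda_{n-1}|$ appears only multiplied by $\Vert Au_{n-1}\Vert$ (see, e.g., (\ref{A5}) and (\ref{A12})), which vanishes identically because $A\equiv 0$; consequently $\lim_n|\lambda_n-\lambda_{n-1}|/\alpha_n=0$ plays no role and can likewise be omitted. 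All remaining hypotheses in $(C1)$--$(C3)$ are retained by assumption, so \thmref{X} applies verbatim and yields strong convergence of $\{x_n\}$ to the unique $x^{\ast}\in\tciFourier$ solving the variational inequality (\ref{1}).

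The main (and essentially only) obstacle here is bookkeeping rather than analysis: one must verify that $A\equiv 0$ legitimately satisfies the inverse-strong-monotonicity hypothesis and that each discarded control condition is actually rendered vacuous by the specialization, i.e. that no estimate in the proof of \thmref{X} secretly relies on $\lambda_n$ or $\beta_n$ in a way that survives when $A\equiv 0$ and $\beta_n\equiv 0$. Tracing the appearances of $|\lambda_n-\lambda_{n-1}|$ and of $\beta_n$ through (\ref{A5}), (\ref{A12}) and (\ref{A13}) confirms this, completing the reduction.
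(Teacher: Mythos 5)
Your reduction is correct and is exactly the route the paper intends: the corollary is stated as a direct consequence of Theorem~\ref{X}, obtained by taking $A\equiv 0$ (which is $\alpha$-inverse strongly monotone for any $\alpha>0$, making $\Omega=C$), $S=I$ and $\beta_n\equiv 0$, under which (\ref{4}) collapses to (\ref{S2}). Your additional check that the three dropped control conditions become vacuous (the $\beta_n$ conditions trivially, and the $\lambda_n$ condition because $|\lambda_n-\lambda_{n-1}|$ only ever enters multiplied by $\Vert Au_{n-1}\Vert=0$) is precisely the bookkeeping the paper leaves implicit.
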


\begin{corollary}
Let $C$ be a nonempty closed convex subset of a real Hilbert space $H$. Let $%
G:C\times C\rightarrow 
\mathbb{R}
$ be a bifunction satisfying assumptions (A1)-(A4), $\varphi :C\rightarrow 
\mathbb{R}
$ be a lower semicontinuous and convex function, $S:C\rightarrow H$ be a
nonexpansive mapping and $T$ be a demicontinuous nearly nonexpansive mapping
with the sequence $\left\{ a_{n}\right\} $ such that $\tciFourier
:=Fix\left( T\right) \cap MEP\left( G,\varphi \right) \neq \emptyset $. Let $%
V:C\rightarrow H$ be a $\gamma $-Lipschitzian mapping, $F:C\rightarrow H$ be
a $L$-Lipschitzian and $\eta $-strongly monotone operator such that these
coefficients satisfy $0<\mu <\frac{2\eta }{L^{2}}$, $0\leq \rho \gamma <\nu $%
, where $\nu =1-\sqrt{1-\mu \left( 2\eta -\mu L^{2}\right) }$. Assume that
either (B1) or (B2) holds. For an arbitrarily initial value $x_{1},$ define
the sequence $\left\{ x_{n}\right\} $ in $C$ generated by%
\begin{equation}
\left\{ 
\begin{array}{l}
G\left( u_{n},y\right) +\varphi \left( y\right) -\varphi \left( u_{n}\right)
+\frac{1}{r_{n}}\left\langle y-u_{n},u_{n}-x_{n}\right\rangle \geq 0\text{, }%
\forall y\in C\text{\ \ \ \ \ \ } \\ 
y_{n}=P_{C}\left[ \beta _{n}Sx_{n}+\left( 1-\beta _{n}\right) u_{n}\right] ,%
\text{ } \\ 
x_{n+1}=P_{C}\left[ \alpha _{n}\rho Vx_{n}+\left( I-\alpha _{n}\mu F\right)
T^{n}y_{n}\right] ,\text{ }n\geq 1,%
\end{array}%
\right.  \label{S3}
\end{equation}%
where $\left\{ r_{n}\right\} \subset \left( 0,\infty \right) $, $\left\{
\alpha _{n}\right\} $ and $\left\{ \beta _{n}\right\} $ are sequences in $%
\left[ 0,1\right] $ satisfying the conditions $(C1)$-$(C3)$ except the
condition $\lim_{n\rightarrow \infty }\frac{\left\vert \lambda _{n}-\lambda
_{n-1}\right\vert }{\alpha _{n}}=0$. Then, the sequence $\left\{
x_{n}\right\} $ generated by (\ref{S3})\ converges strongly to $x^{\ast }\in
Fix\left( T\right) \cap MEP(G,\varphi )$, where $x^{\ast }$\ is the unique
solution of variational inequality (\ref{1}).
\end{corollary}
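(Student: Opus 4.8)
The plan is to obtain this corollary as the special case of Theorem~\ref{X} in which both $A$ and $B$ are taken to be the zero mapping. First I would observe that the zero operator is $\alpha$-inverse strongly monotone and $\theta$-inverse strongly monotone for every choice of positive constants $\alpha,\theta$, since the defining inequality $\langle 0-0,x-y\rangle\ge\theta\|0-0\|^2$ holds trivially. Consequently the requirement $\{r_n\}\subset(0,2\theta)$ imposes no restriction beyond $r_n>0$, which matches the hypothesis $\{r_n\}\subset(0,\infty)$ in (\ref{S3}), and the step-size $\lambda_n$ becomes immaterial. Equivalently, one may note that the resolvent $T_{r_n}$ of Lemma~\ref{7} is well defined for every $r_n>0$ and that the contraction estimate $\|u_n-p\|^2\le\|x_n-p\|^2$ from (\ref{A1}) holds automatically once $B=0$.

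Next I would check that, under $A=B=0$, the iteration (\ref{4}) collapses exactly to (\ref{S3}). With $B=0$ the first line of (\ref{4}) loses the term $\langle Bx_n,y-u_n\rangle$ and becomes the mixed equilibrium inequality defining $u_n=T_{r_n}x_n$. With $A=0$ we have $z_n=P_C(u_n-\lambda_n Au_n)=P_C u_n=u_n$, because $u_n\in C$ and $P_C$ fixes the points of $C$; hence the $y_n$-step of (\ref{4}) reads $y_n=P_C[\beta_n Sx_n+(1-\beta_n)u_n]$, which is precisely the second line of (\ref{S3}), while the $x_{n+1}$-step is unchanged. Thus the sequence $\{x_n\}$ produced by (\ref{S3}) coincides with the one produced by (\ref{4}) for this choice of operators, and Lemma~\ref{L1} already guarantees its boundedness.

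I would then identify the common solution set. Since $GMEP(G,\varphi,0)=MEP(G,\varphi)$ by definition, and the variational inequality problem degenerates to $\Omega=VI(C,0)=\{x\in C:\langle 0,y-x\rangle\ge0,\ \forall y\in C\}=C$, the set $\tciFourier=Fix(T)\cap\Omega\cap GMEP(G,\varphi,0)$ reduces to $Fix(T)\cap MEP(G,\varphi)$, exactly the set named in the corollary; the target variational inequality (\ref{1}) is then literally the same inequality posed over this reduced set. It remains to confirm that $(C1)$–$(C3)$ transfer verbatim with the single stated exception $\lim_{n\to\infty}|\lambda_n-\lambda_{n-1}|/\alpha_n=0$, which may be dropped: the only appearance of this quantity in the proof of Lemma~\ref{L2} is through the term $|\lambda_n-\lambda_{n-1}|\,\|Au_{n-1}\|$ in (\ref{A5}) and (\ref{A12}), and this vanishes identically when $A=0$. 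With all hypotheses of Theorem~\ref{X} thereby in force for the degenerate operators, its conclusion applies directly and delivers the strong convergence of $\{x_n\}$ to the unique solution $x^\ast\in Fix(T)\cap MEP(G,\varphi)$ of (\ref{1}).

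The argument is therefore a direct invocation of Theorem~\ref{X} rather than an independent proof, so I do not expect a substantial obstacle. The only points requiring genuine care are the two degeneracies: verifying that vanishing inverse-strong-monotonicity is admissible (so that the range condition on $r_n$ is harmless and the $\lambda_n$-condition is vacuous), and checking that the collapse $z_n=u_n$ is exact because $u_n\in C$ forces $P_C u_n=u_n$. Once these are settled, no further estimate is needed beyond what Theorem~\ref{X} supplies.
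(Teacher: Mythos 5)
Your proposal is correct and is exactly the paper's (implicit) derivation: the paper offers no separate proof, simply presenting this corollary as the specialization of Theorem~\ref{X} obtained by taking $A=B=0$, so that $z_n=P_Cu_n=u_n$, $GMEP(G,\varphi,0)=MEP(G,\varphi)$, $\Omega=VI(C,0)=C$, and the $\lambda_n$-condition becomes vacuous. Your observation that the estimate (\ref{A1}) and the constraint $\{r_n\}\subset(0,2\theta)$ are harmless when $B=0$ (since every occurrence of $r_n(2\theta-r_n)$ is multiplied by $\|Bx_n-Bp\|^2=0$) is precisely the point needed to justify relaxing to $\{r_n\}\subset(0,\infty)$.
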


\begin{corollary}
\label{Z} Let $C$ be a nonempty closed convex subset of a real Hilbert space 
$H.$ Let $A,B:C\rightarrow H$ be $\alpha ,\theta $-inverse strongly monotone
mappings, respectively. $G:C\times C\rightarrow 
\mathbb{R}
$ be a bifunction satisfying assumptions (A1)-(A4), $\varphi :C\rightarrow 
\mathbb{R}
$ be a lower semicontinuous and convex function, $S:C\rightarrow H$ be a
nonexpansive mapping and $T$ be a nonexpansive mapping such that $%
\tciFourier :=Fix\left( T\right) \cap \Omega \cap GMEP\left( G,\varphi
,B\right) \neq \emptyset $. Let $V:C\rightarrow H$ be a $\gamma $%
-Lipschitzian mapping, $F:C\rightarrow H$ be a $L$-Lipschitzian and $\eta $%
-strongly monotone operator such that these coefficients satisfy $0<\mu <%
\frac{2\eta }{L^{2}}$, $0\leq \rho \gamma <\nu $, where $\nu =1-\sqrt{1-\mu
\left( 2\eta -\mu L^{2}\right) }$. Assume that either (B1) or (B2) holds.
For an arbitrarily initial value $x_{1}\in C,$ consider the sequence $%
\left\{ x_{n}\right\} $ in $C$ generated by (\ref{4}) where $\left\{ \lambda
_{n}\right\} \subset \left( 0,2\alpha \right) $, $\left\{ r_{n}\right\}
\subset \left( 0,2\theta \right) $, $\left\{ \alpha _{n}\right\} $ and $%
\left\{ \beta _{n}\right\} $ are sequences in $\left[ 0,1\right] $
satisfying the conditions $(C1)$-$(C3)$ of Theorem \ref{X} except the
condition $\lim_{n\rightarrow \infty }\frac{a_{n}}{\alpha _{n}}=0$. Then,
the sequence $\left\{ x_{n}\right\} $\ converges strongly to $x^{\ast }\in
\tciFourier $, where $x^{\ast }$\ is the unique solution of the variational
inequality (\ref{1}).
\end{corollary}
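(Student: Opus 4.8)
The plan is to obtain Corollary~\ref{Z} as an immediate specialization of Theorem~\ref{X}, by observing that a nonexpansive mapping is precisely a demicontinuous nearly nonexpansive mapping for which the defining sequence $\{a_n\}$ may be taken to be identically zero. Once this identification is made, the single hypothesis of Theorem~\ref{X} that refers to $\{a_n\}$, namely $\lim_{n\to\infty}\frac{a_n}{\alpha_n}=0$, becomes automatic and hence can be deleted from the list of assumptions, while conditions $(C1)$, the remaining parts of $(C2)$, and $(C3)$ are carried over verbatim.

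First I would verify demicontinuity. Since $T$ is nonexpansive it is $1$-Lipschitzian, hence strongly continuous on $C$; thus whenever $x_n\to x$ strongly we have $Tx_n\to Tx$ strongly, which in particular forces $Tx_n\rightharpoonup Tx$. Therefore $T$ satisfies the definition of demicontinuity given before Theorem~\ref{C}. Next I would check the nearly nonexpansive estimate with the null sequence: a composition of nonexpansive maps is nonexpansive, so $\Vert T^{n}x-T^{n}y\Vert\le\Vert x-y\Vert$ for every $n\ge 1$ and all $x,y\in C$. Writing this as $\Vert T^{n}x-T^{n}y\Vert\le\Vert x-y\Vert+a_n$ with $a_n=0$ shows $T$ is nearly nonexpansive with respect to $\{a_n\}=\{0\}$, and a fortiori nearly Lipschitzian with $\eta(T^{n})=1$, so that $\lim_{n\to\infty}\eta(T^{n})\le 1$. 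This last fact is exactly what Lemma~\ref{b} requires when it is invoked inside the proof of Lemma~\ref{L2}(ii), so that argument goes through unchanged.

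With $a_n\equiv 0$ the quotient $\frac{a_n}{\alpha_n}$ equals $0$ for every $n$, whence $\lim_{n\to\infty}\frac{a_n}{\alpha_n}=0$ holds trivially; this is the justification for omitting that condition from the hypotheses. Every remaining requirement of Theorem~\ref{X} is imposed by hypothesis: $(C1)$, the surviving limits in $(C2)$, the condition $(C3)$, the inclusions $\{\lambda_n\}\subset(0,2\alpha)$ and $\{r_n\}\subset(0,2\theta)$, and the coefficient restrictions $0<\mu<\frac{2\eta}{L^{2}}$ and $0\le\rho\gamma<\nu$ with $\nu=1-\sqrt{1-\mu(2\eta-\mu L^{2})}$. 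Consequently all the intermediate estimates of Lemma~\ref{L1} and Lemma~\ref{L2} hold (indeed they simplify, since the terms involving $a_n$ vanish), and Theorem~\ref{X} applies directly.

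The conclusion then follows: the sequence $\{x_n\}$ generated by (\ref{4}) converges strongly to the unique point $x^{\ast}\in\tciFourier$ that solves the variational inequality (\ref{1}). I do not anticipate any genuine obstacle here, because the entire content of the corollary is the recognition that the nonexpansive case is a degenerate instance of the nearly nonexpansive framework with null perturbation sequence; the only points needing explicit verification are the two structural hypotheses of Theorem~\ref{X}, demicontinuity and the nearly nonexpansive inequality, and both are automatic for a nonexpansive $T$.
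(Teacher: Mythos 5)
Your proposal is correct and is exactly the route the paper intends: the corollary is stated as an immediate consequence of Theorem~\ref{X}, obtained by viewing a nonexpansive $T$ as a demicontinuous nearly nonexpansive mapping with $a_n\equiv 0$, which renders the condition $\lim_{n\rightarrow\infty}\frac{a_n}{\alpha_n}=0$ vacuous. Your explicit verification of demicontinuity and of the hypothesis of Lemma~\ref{b} is a welcome addition, since the paper leaves these checks implicit.
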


\begin{remark}
Our results can be reduced to some corresponding results in the following
ways:

\begin{enumerate}
\item In our iterative process (\ref{S2}), if we take the mapping $T$ as
nonexpansive, $G\left( x,y\right) =0,\varphi =0$ for all $x,y\in C$, $B=0$
and $r_{n}=1$ for all $n\geq 1$, then we derive the iterative process%
\begin{equation*}
x_{n+1}=P_{C}\left[ \alpha _{n}\rho Vx_{n}+\left( I-\alpha _{n}\mu F\right)
Tx_{n}\right] ,\text{ }n\geq 1,
\end{equation*}%
which is studied by Ceng et. al. \cite{Ceng}.\ So, our results extend the
corresponding results of many other authors.

\item In our iterative process (\ref{S3}), if we take $S$ as a nonexpansive
self mapping on $C$, $T$ as a nonexpansive mapping, then it is clear that
our iterative process generalizes the iterative process of Wang and Xu. \cite%
{WX}. Hence, Theorem \ref{X} generalizes the main result of Wang and Xu \cite%
[Theorem 3.1]{WX}. So, our results extend and improve the corresponding
results of \cite{MX1, tian}.

\item The problem of finding the solution of variational inequality (\ref{1}%
), is equivalent to finding the solutions of hierarchical fixed point
problem 
\begin{equation*}
\left\langle \left( I-S\right) x^{\ast },x^{\ast }-x\right\rangle \leq
0,\forall x\in \tciFourier ,
\end{equation*}%
where $S=$ $I-\left( \rho V-\mu F\right) .$
\end{enumerate}
\end{remark}

\end{document}